\title[Sobolev stability threshold for 2D shear flows near Couette]{The Sobolev stability threshold for 2D shear flows near Couette}
\author[J.~Bedrossian]{Jacob Bedrossian}
\address{Department of Mathematics, University of Maryland, College Park, MD 20742}
\email{jacob@cscamm.umd.edu}
\author[V.~Vicol]{Vlad Vicol}
\address{Department of Mathematics, Princeton University, Princeton, NJ 08544}
\email{vvicol@math.princeton.edu}
\author[F.~Wang]{Fei Wang}
\address{Department of Mathematics, University of Southern California, Los Angeles, CA 90089}
\email{wang828@usc.edu}
\newtheorem{theorem}{Theorem}[section]
\newtheorem{proposition}[theorem]{Proposition}
\newtheorem{lemma}[theorem]{Lemma}
\theoremstyle{definition}
\newtheorem{remark}[theorem]{Remark}
\numberwithin{equation}{section}
\def\comma{ {\rm ,\qquad{}} }            
\def\curl{\mathop{\rm curl}\nolimits}    
\def\ne#1{#1_{\neq}}
\def\vv{\bar{v}}
\def\spacef{\ \ \ \ \ \ \ \ }
\def\RR{\mathbb R}
\def\TT{\mathbb T}
\def\eps{\varepsilon}
\def\ang#1{\langle #1\rangle}
\def\tilde{\widetilde}
\newcommand{\norm}[1]{\left\lVert#1\right\rVert}
\newcommand{\abs}[1]{\left\vert#1\right\vert}
\newcommand{\grad}{\nabla}
\newcommand{\brak}[1]{\langle #1 \rangle} 
\begin{document}

\begin{abstract}
We consider the 2D Navier-Stokes equation on $\mathbb T \times \mathbb R$, with initial datum that is $\varepsilon$-close in $H^N$ to a shear flow $(U(y),0)$, where $\| U(y) - y\|_{H^{N+4}} \ll 1$ and $N>1$. We prove that if $\varepsilon \ll \nu^{1/2}$, where $\nu$ denotes the inverse Reynolds number, then the solution of the Navier-Stokes equation remains $\varepsilon$-close in $H^1$ to $(e^{t \nu \partial_{yy}}U(y),0)$ for all $t>0$. Moreover, the solution converges to a decaying shear flow for times $t \gg \nu^{-1/3}$ by a mixing-enhanced dissipation effect, and experiences a transient growth of gradients. 
In particular, this shows that the stability threshold in finite regularity scales no worse than $\nu^{1/2}$ for 2D shear flows close to the Couette flow. \hfill \today.
\end{abstract}
\maketitle

\section{Introduction}

A fundamental problem in the field of hydrodynamic stability is to assess the stability of shear flows in the (2D or 3D) Navier-Stokes equations at high Reynolds number ($\nu \rightarrow 0$); see e.g. the texts \cite{DrazinReid81,SchmidHenningson2001,Yaglom12} and the references therein.

In this paper we consider the incompressible 2D Navier-Stokes equations
\begin{subequations}
  \begin{align}
  \partial_t \tilde v+ \tilde v\cdot\nabla \tilde  v-\nu\Delta  \tilde v+\nabla \tilde{p}&=0, \quad \nabla \cdot \tilde v = 0, \label{eq:NS}\\ 
  \tilde v(t=0) &= \tilde v_{\rm in} \label{eq:NS:IC}
  \end{align}
\end{subequations}  
on the domain $\Omega=\{(x,y) \in \mathbb{T}\times\mathbb{R}\}$, where $\nu$ denotes the inverse Reynolds number. Here $\mathbb{T}$ is the periodized interval $[0,1]$. 
The initial datum $\tilde v_{\rm in}$ is taken to be a small perturbation of a shear flow profile $(U(y), 0)$, and we write
\begin{align*}
\tilde v_{\rm in}(x,y)=(U(y), 0)+\vv_{\rm in}(x,y).
\end{align*}
If $\vv_{\rm in} \equiv 0$, the solution of the 2D Navier-Stokes equation \eqref{eq:NS}--\eqref{eq:NS:IC} is given by the heat evolution of the shear profile, i.e.
\begin{align}
(\bar{U}(t,y),0)  = (e^{\nu t \partial_{yy}} U(y), 0). \label{def:barU}
\end{align}
In this paper we are concerned only with shears $U(y)$ which satisfy $\norm{\partial_yU - 1}_{H^s} \ll 1$ for $s$ large enough (see Theorem \ref{thm:main} below for precise requirement) and hence are close to the Couette flow $U(y) = y$.  
For small (but nontrivial) $\vv_{\rm in}$, we look for a solution $\tilde v$ as a perturbation of this decaying shear profile
and define
\begin{align*}
\tilde v (t,x,y) =(\bar{U}(t,y), 0)+\vv(t,x,y)
\end{align*}
with the equations satisfied by $\vv$ being obtained from \eqref{eq:NS}--\eqref{eq:NS:IC} as
\begin{subequations}
\begin{align}
  \vv_t +\bar{U} \partial_x \vv+ \vv\cdot \nabla \vv -\nu\Delta \vv + (\bar{U}'\vv^y, 0) +\nabla p &=0, \quad   \nabla \cdot \vv =0,\label{EQ01} \\
  \vv(t=0)&=\vv_{\rm in}.
  \end{align}
\end{subequations}  
In \eqref{EQ01} we have used the notation $\bar{U}'=\partial_y\bar{U}$, and $\vv^y$ for the second, i.e. $y$, component of the vector field $\vv$.
Taking the curl of equation \eqref{EQ01} and denoting 
\begin{align*}
\omega=\curl \vv,
\end{align*} 
we obtain the vorticity formulation of the perturbed Navier-Stokes equation
\begin{subequations} \label{def:omegasys}
  \begin{align}
  \label{vort-orig}
  \omega_t +\bar{U}\partial_x \omega+ \vv\cdot \nabla \omega &=\bar{U}''\partial_x\psi+\nu\Delta \omega
  \\
  \label{pote-orig}
  \Delta \psi &=\omega
  \\
  \label{velo-orig}
  \vv&=\nabla^{\perp}\psi\\
  \omega(t=0) &= \omega_{\rm in},
  \label{vort-IC}
  \end{align} 
\end{subequations} 
where we have used the notation $\bar{U}''=\partial_{yy}\bar{U}$ and $\grad^\perp = (-\partial_y,\partial_x)$. In \eqref{pote-orig}  the stream function $\psi$ is taken to have zero mean over $\Omega$, so that we may write $\psi = - (-\Delta)^{-1} \omega$ and $\bar v =- \nabla^\perp (-\Delta)^{-1} \omega$.

\subsection{Statement and discussion of the main result}

The purpose of this paper is to study the long-time dynamics of the perturbation $\omega$ solving \eqref{def:omegasys} in the high Reynolds number limit $\nu \rightarrow 0$. 

Our main goal is to estimate the \emph{stability threshold} for the solution $\bar{U}(t,y)$, and in particular, to determine how it scales with respect to $\nu$. 
 That is, given an $N > 1$, try to find the \emph{smallest} $\gamma > 0$ such that $\norm{\omega_{\rm in}}_{H^N} = \eps \ll \nu^{\gamma}$ implies that the perturbation $\omega$ remains small in $L^2 \cap H^{-1}$, and to determine the dynamics of such stable solutions. 
We shall see that stability uniformly in $\nu$ in $H^s$ for any $s > 0$ is necessarily false as the solution undergoes a large transient growth in these norms due to the mixing caused by the shear. 
 
At high Reynolds number, in 3D experiments and computer simulations, nonlinear instability is often observed at a lower Reynolds numbers than what is predicted by linear theory. 
This is usually referred to as \emph{subcritical transition} \cite{SchmidHenningson2001,Yaglom12}.   
In some cases, such as \eqref{def:omegasys}, the flow is linearly stable at all Reynolds numbers, but the nonlinear stability threshold might be decreasing as $\nu \rightarrow 0$, resulting in instability at a finite Reynolds number in any experiment or simulation. 
Hence, \emph{given a norm} $\norm{\cdot}_X$, the goal is to determine a $\gamma = \gamma(X)$ such that (naturally we do not know a priori that it is a power law),  
\begin{align*}
\norm{\omega_{\rm in}}_{X} & \lesssim \nu^\gamma \quad \Rightarrow \quad \textup{stability} \\ 
\norm{\omega_{\rm in}}_{X} & \gg \nu^\gamma \quad \Rightarrow \quad \textup{possible instability}. 
\end{align*} 
In the applied math and physics literature, $\gamma$ is sometimes referred to as the \emph{transition threshold}. 

The minimal value of $\gamma$ is expected to depend non-trivially on the norm $X$. For example, in the numerical experiments of \cite{ReddySchmidEtAl98} on 3D Couette flow, it is estimated that ``rough'' initial perturbations (e.g. weaker $X$) result in a higher $\gamma$.  
In the case of \emph{3D} Couette flow, it was shown in \cite{BGM15I,BGM15II} that $\gamma = 1$ for $X$ taken as Gevrey-$m$ with $m < 2$, and that $\gamma \leq 3/2$ for $X = H^s$ for $s > 7/2$ in \cite{BedrossianGermainMasmoudi15a} (the latter estimate is consistent with the numerical estimation of $31/20$ given in \cite{ReddySchmidEtAl98}).

However, for the \emph{2D} Couette flow, it was shown in \cite{BMV14} that in fact $\gamma = 0$ for $X$ taken as Gevrey-$m$ with $m < 2$. That is, for initial perturbations taken sufficiently smooth, the Couette flow is uniformly stable at high Reynolds number and there is \emph{no} subcritical transition. It was shown earlier in \cite{BM13} that the Couette flow is also nonlinearly stable (in a suitable sense) for the 2D Euler equations, the case $\nu=0$, for such sufficiently smooth Gevrey perturbations.

In this paper, we estimate the stability threshold in 2D, now in {\em Sobolev regularity} (as opposed to in a Gevrey class~\cite{BMV14}). 
As in 3D, we could expect the stability threshold in Sobolev regularity to be worse than in a Gevrey class (or at least, our estimate of the stability threshold). 
Indeed, it follows from \cite{LinZeng11} for that topologies $X$ which are weaker than $H^{3/2}$, we should expect $\gamma>0$ even for the Couette flow.
{\em The stability threshold we estimate is $\gamma \leq 1/2$}. Our main result is:

\begin{theorem} \label{thm:main}
Let $N>1$, $0 < \nu \leq 1$, and $C\geq 1$ be a sufficiently large constant depending only on $N$ (in particular, it is independent of $\nu$). Consider a shear flow $U = U(y)$ such that  
\begin{align*}
  \Vert U'-1\Vert_{H^s(\RR)}+\Vert U''\Vert_{H^s(\RR)} = \delta  \leq C^{-1}
\end{align*}
for some $s \ge 2+N$ and $\delta$ independent of $\nu$. Assume that the initial perturbation obeys 
\begin{align}
\label{eq:thm:IC:size}
\Vert \omega_{\rm in}\Vert_{H^N(\Omega)} + \norm{\bar{v}_{\rm in}}_{L^2(\Omega)} =\varepsilon \leq C^{-1} \nu^{1/2}.
\end{align}
Then the global in time solution $\omega$ to \eqref{vort-orig}--\eqref{vort-IC} obeys
\begin{subequations} \label{ineq:thmineqs}
  \begin{align}
  \norm{\vv}_{L^\infty(0,\infty; L^2(\Omega))} + \norm{\omega_0}_{L^\infty(0,\infty;H^N(\Omega))} + \nu^{1/2}\norm{\grad \omega_0}_{L^2(0,\infty;H^N(\Omega))} & \leq C\varepsilon \\ 
  \norm{\omega \circ \left({x+ t\bar U(t,y)},y\right)}_{L^{\infty}( 0,\infty; H^{N}(\Omega))} 
  & \leq C \varepsilon \label{ineq:omthm}
  \end{align}
\end{subequations}
and we have the enhanced dissipation estimate
  \begin{align}
  \Vert \ne{\omega} \Vert_{L^{2}(0,\infty; H^{N}(\Omega)) }\leq C \varepsilon\nu^{-1/6} \label{ineq:thmED}.
  \end{align}
In \eqref{ineq:thmineqs}--\eqref{ineq:thmED} we have denoted by $\ne{\omega}$ the projection of $\omega$ onto its nonzero Fourier modes with respect to $x$, and by $\omega_0$ the projection of $\omega$ onto the zero Fourier mode with respect to $x$ (see also~\eqref{def:h0hneq}).  
\end{theorem}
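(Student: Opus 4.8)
The plan is to work in the coordinates moving with the (time-dependent, heat-evolved) shear, i.e. to introduce $z = x + t\bar U(t,y)$ and the new unknown $f(t,z,y) = \omega(t, x+t\bar U(t,y), y)$, so that the transport term $\bar U \partial_x \omega$ is removed at the cost of (i) introducing the ``mixed'' operators $\partial_z$ and $\partial_y^L := \partial_y - t \bar U' \partial_z$ everywhere the Laplacian and the Biot--Savart law appeared, and (ii) generating an explicit $t$-dependence through $\bar U$ that is, however, exponentially decaying to a nearby Couette profile by the linear heat semigroup. One then splits $f$ into its $x$-average $f_0$ and its nonzero modes $\ne f$, and runs a bootstrap/continuity argument: assume \eqref{ineq:thmineqs}--\eqref{ineq:thmED} hold with $C$ replaced by $2C$ on a maximal time interval $[0,T^\ast)$, and improve each to $C$. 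The key linear mechanisms to exploit are: inviscid damping of $\ne{\vv}$ (the stream function loses powers of $\langle t\rangle$ because $\Delta^{-1}_L$ acting on $\ne f$ gains $\langle kt\rangle^{-2}$ type factors), and \emph{enhanced dissipation} on $\ne\omega$ — the combination of mixing and the $\nu\Delta$ term forces $\ne f$ to decay on the timescale $\nu^{-1/3}$ rather than $\nu^{-1}$, which is exactly what \eqref{ineq:thmED} (with the $\nu^{-1/6}$ from integrating $e^{-c\nu^{1/3}t}$ in $L^2_t$) encodes.

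Concretely I would set up three coupled energy estimates. First, for the zero mode $\omega_0$ (equivalently $f_0$, which satisfies a forced 1D drift-diffusion equation $\partial_t f_0 - \nu\partial_{yy} f_0 = -\partial_y(\vv^y \ne\omega)_0 + (\text{lower order from }\bar U''\partial_x\psi\text{ projected})$, the forcing being purely quadratic in the nonzero modes), prove propagation of $H^N$ with the $\nu^{1/2}\|\nabla\omega_0\|_{L^2_t H^N}$ dissipation gain; here the only input needed is that $\|\ne\omega\|_{L^2_t H^N}$ is integrable in the right norm, which is supplied by \eqref{ineq:thmED}. Second, for $\ne f$, carry out a weighted energy estimate with a Fourier multiplier/time-dependent weight $m(t,k,\eta)$ (a ``ghost'' or Gevrey-type correction adapted only to the Sobolev loss, much milder than in the Gevrey papers) designed so that $-\dot m/m$ dominates the bad terms coming from $\partial_t\partial_y^L = -\bar U'\partial_z - t\bar U''\partial_z + \dots$ and from the stream-function contributions; combined with the enhanced-dissipation estimate for the semigroup of $\partial_t - \nu\Delta_L + i k \bar U$ this yields $\|\ne f\|_{L^\infty_t H^N} \lesssim \varepsilon$ and the $\varepsilon\nu^{-1/6}$ bound on $\|\ne\omega\|_{L^2_t H^N}$. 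Third, handle the low-frequency/velocity quantity $\|\vv\|_{L^2}$ directly from the $L^2$ energy identity for \eqref{EQ01}, where the only non-sign-definite term is $\int \bar U' \vv^x\vv^y$, controlled by $\delta$ small plus the decay of $\ne{\vv}$.

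The nonlinear terms all have the schematic form $\vv\cdot\nabla\omega = \ne{\vv}\cdot\nabla\ne\omega + \ne{\vv}\cdot\nabla\omega_0 + \vv_0^x \partial_x\ne\omega + \dots$; each must be estimated in $H^N$ (or $H^{N-1}$ after an integration by parts) by distributing derivatives via paraproduct-type splittings and using: (a) the $H^N$ control of both factors from the bootstrap, (b) the inviscid-damping decay of $\ne{\vv}$ to convert the absence of $\langle t\rangle$-decay in $\omega$ into integrable-in-time factors, and (c) Cauchy--Schwarz in time against the $\nu^{1/2}$- or $\nu^{1/3}$-dissipation norms, which is where the hypothesis $\varepsilon \lesssim \nu^{1/2}$ is consumed: a typical worst term produces a factor like $\varepsilon \cdot \varepsilon\nu^{-1/6}\cdot\nu^{-1/3} = \varepsilon^2\nu^{-1/2} \lesssim \varepsilon$. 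I expect the main obstacle to be the interaction between the zero mode and the nonzero modes — specifically controlling $\ne{\vv}\cdot\nabla\omega_0$ and the reaction term where $\omega_0$ (which is merely bounded, with no decay and a full $N$ derivatives) feeds back into the $\ne f$ equation — since one cannot afford to lose a derivative there; this is resolved by noting $\omega_0$ appears paired with $\partial_z$ (from $\ne{\vv}^x\partial_z$) or with the extra smallness/decay of $\ne\psi$, and by exploiting that the ``$x$-dependence'' structure means $\omega_0$ only ever multiplies a genuinely nonzero-mode factor that carries either inviscid-damping or enhanced-dissipation decay. A secondary technical point is making the time-dependence of $\bar U$ harmless: since $\|\bar U(t) - U_\infty\|_{H^s}$ and $\|\bar U'(t)-1\|$, $\|\bar U''(t)\|$ stay $\lesssim\delta$ uniformly (maximum principle / heat smoothing), all the ``error'' terms they generate are either $\delta$-small or come with $\partial_z$ and decay, so they are absorbed exactly as in the Couette case with an extra $\delta$ factor.
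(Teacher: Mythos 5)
Your overall architecture --- a coordinate change absorbing the decaying shear transport, a ghost-energy estimate with a time-dependent Fourier multiplier, a zero/nonzero mode split, and a bootstrap driven by inviscid damping plus the $\nu^{-1/6}$ enhanced-dissipation gain --- is the same as the paper's. However, there is one point where I believe the argument as you sketch it genuinely fails, and it is the one technical point that separates the near-Couette case from the Couette case.

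You keep $y$ as the second independent variable and replace only $x$, so that $\partial_y \mapsto \partial_y - t\,\bar U'(t,y)\,\partial_z$. Writing $\bar U' = 1 + (\bar U'-1)$, the difference between your transformed Laplacian and the clean operator $\Delta_L := \partial_z^2 + (\partial_y - t\partial_z)^2$ carries coefficients of the form $t\,(\bar U'(t,y)-1)\,\partial_z$ (and their squares). The hypothesis $U'-1 \in H^s$ gives \emph{no} decay of $\|\bar U'(t)-1\|_{L^\infty}$ under the heat semigroup (the maximum principle gives only boundedness; it is $\bar U''$, not $\bar U'-1$, that decays like $(\nu t)^{-1/2}$ in $L^2$). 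So these coefficients are genuinely of size $t\,\delta$, growing linearly in time; at $t\sim\nu^{-1/3}$ they are already of size $\delta\nu^{-1/3}$, and you have no compensating smallness. This destroys the Biot--Savart inversion, shifts the Orr critical times by an unbounded amount, and produces commutators with the multiplier that do not close. Your claim that the $\bar U$-error terms are ``either $\delta$-small or come with $\partial_z$ and decay'' is precisely what is false here. The fix, used by the paper and attributed to \cite{BM13}, is to also change the vertical variable, $v = \bar U(t,y)$, so that $\partial_y = \bar U'(t,y)\,(\partial_v - t\partial_z)$ \emph{exactly}. Then the transformed Laplacian is $\Delta_t = \partial_z^2 + a^2\,\partial^L_{vv} + b\,\partial^L_v$ with $a(t,v)=\bar U'$, $b(t,v)=\bar U''$, and the deviation from $\Delta_L = \partial_z^2 + (\partial_v - t\partial_z)^2$ is a multiplicative perturbation whose coefficients $a^2-1$ and $b$ are $O(\delta)$ \emph{uniformly in time}, and --- crucially --- the critical time $t=\xi/k$ is unchanged. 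This is not a cosmetic choice: without it the near-Couette problem does not reduce to the Couette problem plus $\delta$-small errors, which is the whole point of Section~\ref{S4}.

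Two smaller remarks. First, $\bar U''\partial_x\psi$ has zero $x$-average, so it does not contribute a forcing to the $f_0$ equation; it enters only the nonzero-mode estimate (as the source term $S$). Second, your multiplier is left qualitative. To close the transport estimate $\mathcal{T}_{\neq}$, the source $S$, and to commute the multiplier past $\Delta_L\Delta_t^{-1}$ (which is no longer a Fourier multiplier once $a,b\neq\textrm{const}$), you need quantitative properties of the type \eqref{eq:M:cond:1}--\eqref{eq:M:cond:6}; in particular the bounded logarithmic $\xi$-derivative \eqref{eq:M:cond:4} and the near-diagonal comparison \eqref{eq:M:cond:6} are what make the commutator argument and Lemma~\ref{l03} go through, not just the fact that $-\dot m/m$ absorbs the inviscid-damping loss.
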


\begin{remark} 
Estimate \eqref{ineq:thmED} encodes that the time-scale on which the deviation from the mean in $x$ is decaying is much {\em faster than the heat equation time-scale} (which would be $\nu^{-1/2}$). This ``enhanced dissipation'' effect is the key to this work, and is discussed further below in ~Section~\ref{sec:EDID}. 
\end{remark} 

\begin{remark} 
Notice that Theorem \ref{thm:main} is neither stronger nor weaker than the results of \cite{BMV14}. In \cite{BMV14}, much stronger regularity hypotheses are taken, however, $\eps$ can be chosen independently of $\nu$, whereas in Theorem \ref{thm:main}, the data is permitted to be much rougher but is instead required to be much smaller, specifically $\eps \lesssim \nu^{1/2}$. 
\end{remark}

\begin{remark} 
The composition in \eqref{ineq:omthm} looks slightly non-standard, but note that $\bar{U}(t,y)$ is essentially constant (in time) until $t \gtrsim \nu^{-1}$, at which point $\omega_{\neq}$ is essentially zero by \eqref{ineq:thmED}. 
\end{remark} 

\begin{remark}
The regularity assumption $N>1$ arises due to the fact that this is the smallest Sobolev exponent which guarantees the local in time well-posedness of the underlying 2D Euler equation ($\nu =0$). Since our constants are all  independent of $\nu$ (which may be taken arbitrarily small), such a requirement on the size of $N$ is arguably natural. 
\end{remark}

\begin{remark} 
Using techniques from \cite{BMV14}, one can slightly weaken the regularity requirement \eqref{eq:thm:IC:size} via local parabolic smoothing. 
Specifically, if one can decompose $\omega_{\rm in} = \omega_{\rm in}^R + \omega_{\rm in}^S$ such that for an $\eta > 0$ we have $\Vert \omega_{\rm in}^S \Vert_{H^N(\Omega)} + \nu^{1/2+\eta}\norm{\omega_{\rm in}^R}_{L^2} + \norm{\bar{v}_{\rm in}}_{L^2(\Omega)} =\varepsilon \ll_{\eta,N} \nu^{1/2}$, then the conclusions of Theorem \ref{thm:main} apply, except that the inequalities \eqref{ineq:thmineqs} and \eqref{ineq:thmED}  hold over $(1,\infty)$.  
\end{remark} 

\begin{remark}
Naturally, the value  $\gamma=1/2$ is much lower than for 3D Couette in Sobolev spaces (in 3D, the estimated value is $\gamma \leq 3/2$ \cite{BedrossianGermainMasmoudi15a}, consistent also with numerics \cite{ReddySchmidEtAl98}). The major differences between 2D and 3D are due to: the linearized 3D Couette flow induces vortex stretching and a 3D non-normal transient growth known as the lift-up effect; the class of $x$-independent flows is much larger in 3D; and the nonlinearity has a much more problematic ``nonlinear resonance'' structure in 3D (see \cite{BGM15I} for more discussion).   
It seems difficult to lower $\gamma$ here without confronting the main nonlinear source of regularity losses encountered in the case of Gevrey initial datum~\cite{BM13,BMV14}. See Subsection~\ref{sec:EDID} below for more discussion.  
\end{remark}

In order to fix the main ideas of the proof, we first prove Theorem~\ref{thm:main} in the case of the Couette flow $U(y) = y$ (see~Section~\ref{sec:Couette} below). 
The advantage here is that $\bar U(t,y) = y$ for all $t>0$, and thus the change of coordinates we use to account for the fast action of the shear flow is trivial (cf.~\eqref{def:CouetteChange}). Moreover, the case $U(y)=y$ has the advantage that the linearized Navier-Stokes equations lack the $\bar{U}''$ velocity term  (see e.g. the right side of \eqref{vort-orig}) and that the Biot-Savart law is a Fourier multiplier in the new variables. These make the proof of Theorem~\ref{thm:main} in the case of Couette more transparent. At the linear level, the main effects we take advantage of are inviscid damping and enhanced dissipation (see~Section~\ref{sec:EDID} below). 

In proving Theorem~\ref{thm:main} for the more general class of shear flows which are ``close to Couette'', we use the same main ideas as in the Couette case (e.g. the same norm), but we are faced with a number of technical difficulties due to a more complicated change of coordinates (cf.~\eqref{def:zv}) which needs to be adapted to the decaying shear. For example, we need to better quantify the transfer of kinetic energy from the non-zero modes (with respect to $x$) to the zero one. At the technical level, we also need to consider a number of new commutators between the Fourier multipliers that define the norm and multiplication operators with functions of $y$. These details are given in Section~\ref{sec:equivalence}.

\subsection{The linearized problem: inviscid damping and enhanced dissipation} \label{sec:EDID}
Before addressing the nonlinear problem, it is important to understand the properties of the linearized problem in the case $U(t,y) = y$, which we review here; see also \cite{BMV14} for similar discussions. 
In the case $U(t,y) = y$, the linearization of \eqref{def:omegasys} becomes
\begin{align*}
\partial_t \omega + y\partial_x \omega & = \nu \Delta \omega \\ 
\Delta \psi & = \omega.  
\end{align*}
These equations were first solved by Kelvin in \cite{Kelvin87}. The solution is given, in Fourier space, by 
\begin{align*}
\widehat{\omega}(t,k,\eta) &= \widehat{\omega}_{\rm in}(k,\eta + kt) \exp\left[-\nu \int_0^t k^2 + \abs{\eta - k(\tau - t)}^2 d\tau\right]. 
\end{align*}
From this formula, one verifies the enhanced dissipation effect: for some universal $c > 0$ there holds 
\begin{align*}
\norm{\omega_{\neq}(t)}_{L^2} \lesssim \norm{\omega_{\rm in}}_{L^2} e^{-c\nu t^3}, 
\end{align*}
which explains the accelerated time-scale \eqref{ineq:thmED}. 
Physically, the fast mixing of the Couette flow is sending information to high frequencies linearly in time, enhancing the viscous damping at the corresponding rate. 
This mixing-enhanced dissipation effect or related mechanisms have been studied in many works on linear equations, for example, in \cite{CKRZ08,GallagherGallayNier2009,Zlatos2010,BeckWayne11,VukadinovicEtAl2015,BCZGH15,BCZ15} and in the physics literature \cite{Lundgren82,RhinesYoung83,DubrulleNazarenko94,LatiniBernoff01,BernoffLingevitch94}. 
This effect implies that solutions strongly converge to a slowly decaying shear flow after times like $t \gtrsim \nu^{-1/3}$. 

A related, but more subtle effect, is that of inviscid damping, first noticed by Orr in \cite{Orr07}.  
For this, note that 
\begin{align*}
\widehat{\psi}(t,k,\eta) & = \frac{\widehat{\omega}_{\rm in}(k,\eta + kt)}{k^2 + \eta^2} \exp\left[-\nu \int_0^t k^2 + \abs{\eta - k(\tau - t)}^2 d\tau\right] \\ 
& = \frac{1}{(k^2 + \eta^2)\brak{\eta+kt}^2} \brak{\eta+kt}^2\widehat{\omega}_{\rm in}(k,\eta + kt) \exp\left[-\nu \int_0^t k^2 + \abs{\eta - k(\tau - t)}^2 d\tau\right], 
\end{align*}
which implies 
\begin{align}
\norm{\psi_{\neq}(t)}_{L^2} \lesssim \frac{1}{\brak{t}^2} \norm{\omega_{\rm in}}_{H^2} e^{-c\nu t^3}. \label{ineq:ID}
\end{align}
In particular, it follows that there is some decay which is \emph{independent of Reynolds number}, and indeed is present even in the case $\nu = 0$. 
Moreover, by similar arguments, one has 
\begin{align}
\norm{\partial_x\psi_{\neq}(t)}_{L^2} + \brak{t}^{-1}\norm{\partial_y \psi_{\neq}(t)}_{L^2} \lesssim \frac{1}{\brak{t}^2} \norm{\omega_{\rm in}}_{H^2} e^{-c\nu t^3}, \label{ineq:psineqLin}
\end{align}
which implies that the velocity field converges back to a shear flow even at infinite Reynolds number.
The name \emph{inviscid damping} is due to its relationship with Landau damping in plasma physics (see \cite{LinZeng11,Zillinger2014,BM13,WeiZhangZhao15} and the references therein for more discussion on inviscid damping and \cite{CagliotiMaffei98,Ryutov99,MouhotVillani11,BMM13} for discussions regarding Landau damping).
The regularity loss in \eqref{ineq:psineqLin} is physically meaningful and in particular is connected to a transient (non-normal) unmixing effect known as the Orr mechanism. 
When $\gamma$ is too small, this loss makes it impossible to naively close the kind of regularity estimates required to apply \eqref{ineq:ID} due to the large derivative loss in the right hand side (see \cite{BM13} for a more in-depth discussion). Physically, this manifests as the nonlinear echo resonance \cite{YuDriscoll02,YuDriscollONeil,MouhotVillani11,BM13}. 
In this paper we are able to balance this difficulty with the enhanced dissipation. 
This balance is what sets the requirement $\eps \ll \nu^{1/2}$ used here (see e.g. the treatment of estimate \eqref{ineq:CouetteTneq} below). 

 \subsection{Notations used in this paper}
Throughout this paper, we write \[ \ang{x}=\sqrt{1+x^2}.\] For a function $h(x,y)$, denote by 
\begin{align}
 h_0(y) =\int_{\TT} h(x,y)\,dx \quad \mbox{and} \quad h_{\neq}(x,y)=h(x,y)-h_0(y), \label{def:h0hneq}
\end{align}
the projections onto $0$ frequencies with respect to $x$ and the projection onto non-zero frequencies, respectively. 
By convention, we use $k, l$ to represent the Fourier variable of $x$, while $\xi, \eta$ are  the Fourier variable of $y$. The Fourier transform of a function $a$ is denoted by $\hat{a}(k,\xi)$.
We use the notation $f\lesssim g$ to express $f\le Cg$ for some constant $C>0$ that is independent of the parameters of interest and use $f \ll g$ when $f\le \varepsilon_0g$ for some {\em universal} constant $\varepsilon_0$ sufficiently close to $0$. 
The notation $L^pL^q = L^p_t L^q_{x,y}$ is used for the Banach space $L^p\left([0,T]; L^q(\Omega)\right)$ with norm
$
  \Vert f(t,x)\Vert_{L^pL^q}^p= \int_0^T\left(\int_{\Omega} |f|^q \,dx\right)^{p/q}\,dt
$
where $p,q\in[1,\infty]$ and $T>0$. We also use the space $L^pH^s = L^p_t H^s_{x,y}$ whose norm is given
$
  \Vert f\Vert_{L^pH^s}^p= \int_0^T\Vert f\Vert_{H^s}^{p}\,dt
$
for $p\in[1,\infty]$ and $s\ge0$.

\section{Stability threshold for the Couette flow} \label{sec:Couette}
In order to introduce several of the main ideas of the proof, we first prove Theorem \ref{thm:main} in the case that $U(y) = \bar{U}(t,y) = y$.  
We change coordinates to mod out by the fast mixing of the Couette flow, however, unlike the previous works on Navier-Stokes and Euler \cite{BM13,BMV14,BGM15I,BGM15II,BedrossianGermainMasmoudi15a}, we do not need a coordinate change that depends on the solution itself:  
\begin{subequations} \label{def:CouetteChange}
\begin{align}
z & = x-tv \\ 
v & = y.  
\end{align}
\end{subequations}
The change $v \mapsto y$ is made only to unify with Section~\ref{S4} below. 
If we then write $f(t,z,v) = \omega(t,z+tv,v)$, the Navier-Stokes equations \eqref{def:omegasys} become 
\begin{subequations} 
  \begin{align}
  \label{lnr-equ}
  &\partial_tf + u\cdot\nabla_Lf=\nu\Delta_Lf
  \\&
  \label{bs-law}
  u=- \nabla^{\perp}_L (-\Delta_L)^{-1}f, 
  \end{align}
\end{subequations}
where 
\begin{subequations} \label{def:gradL} 
\begin{align}
\grad_L & = \begin{pmatrix} \partial_z \\ \partial_v - t\partial_z \end{pmatrix} \\ 
\Delta_L & = \grad_L \cdot \grad_L = \partial_{zz} + (\partial_v - t\partial_z)^2.  
\end{align}
\end{subequations}
The following statement in the new coordinates then implies Theorem  \ref{thm:main} (in the case $U(y) = y$). 
\begin{theorem}
\label{T01}
Let $N>1$ and assume $\Vert f_{\rm in}\Vert_{H^N}=\varepsilon\ll\nu^{1/2}$ {with $0<\nu\le1$}. Then the unique global in time solution $f$ to \eqref{lnr-equ}--\eqref{bs-law} is such that
  \begin{align}
  \label{eq:T01:A}
  \Vert f\Vert_{L^{\infty}H^{N}} 
 +
  \nu^{1/2}\Vert \nabla_Lf\Vert_{L^2H^{N}} 
  \lesssim\varepsilon,
  \end{align}
and
  \begin{align}
  \label{eq:T01:B}
  \Vert \ne{f}\Vert_{L^{2} {H^{N}}}\lesssim \varepsilon\nu^{-1/6},
  \end{align}
 where the implicit constants do not depend on $\nu$ or on the initial datum.
\end{theorem}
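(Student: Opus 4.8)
\emph{Proof strategy.} The plan is to run a continuity (bootstrap) argument for $f$ in the coordinates $(z,v)$. I would first record, by standard parabolic theory — at each fixed time \eqref{lnr-equ}--\eqref{bs-law} is a parabolic system with coefficients smooth on compact $t$-intervals, and $H^N(\Omega)$ is an algebra since $N>1$ — local well-posedness in $H^N$ together with the continuation criterion that the solution persists as long as $\Vert f(t)\Vert_{H^N}$ is finite. Writing $f = f_0 + \ne{f}$ and $u = u_0 + \ne{u}$ as in \eqref{def:h0hneq}, one has that $u_0 = (u_0^1(v),0)$ is a shear with $\partial_v u_0^1 = -f_0$, that the zero-mode forcing is quadratic in $\ne{f}$ and sits in divergence form, $\partial_t f_0 = \nu \partial_{vv} f_0 + (\ne{u}\cdot\nabla_L \ne{f})_0$, and that $\partial_t \ne{f} + u_0^1 \partial_z \ne{f} + (\ne{u}\cdot\nabla_L f_0) + (\ne{u}\cdot\nabla_L \ne{f})_{\neq} = \nu \Delta_L \ne{f}$. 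On a maximal time interval $[0,T_\star)$ I would propagate three bootstrap quantities: (i) the energy estimate $\Vert f\Vert_{L^\infty H^N} + \nu^{1/2}\Vert \nabla_L f\Vert_{L^2 H^N} \le C_0 \varepsilon$; (ii) the enhanced dissipation estimate $\Vert \ne{f}\Vert_{L^2 H^N} \le C_1 \varepsilon \nu^{-1/6}$; and (iii) the inviscid damping estimate $\Vert \ne{u}\Vert_{L^2 H^N} \le C_2 \varepsilon$ (a bound on the velocity zero mode, corresponding to the role of the hypothesis $\Vert\vv_{\rm in}\Vert_{L^2}$ in Theorem~\ref{thm:main}, is also carried along for the translation back to the original coordinates). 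The goal is to show each $C_j$ improves to $C_j/2$ once $\varepsilon \le C^{-1}\nu^{1/2}$, which by continuity forces $T_\star = \infty$ and yields \eqref{eq:T01:A}--\eqref{eq:T01:B}.

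For (i) I would run $L^2$ energy estimates for $\langle\nabla\rangle^N f_0$ and $\langle\nabla\rangle^N \ne{f}$. The top-order transport contributions vanish by incompressibility — $\int u_0^1 \partial_z |\langle\nabla\rangle^N \ne{f}|^2 = 0$ (integration in $z$) and $\int \ne{u}\cdot\nabla_L |\langle\nabla\rangle^N \ne{f}|^2 = 0$ (since $\nabla_L\cdot\ne{u}=0$) — leaving the remaining terms to Kato--Ponce commutator estimates and the algebra property of $H^N$. The decisive structural point is that every surviving cubic interaction carries \emph{one} factor in $L^2_t$ and the rest in $L^\infty_t$: the mean-flow transport $u_0^1\partial_z\ne{f}$ produces only the commutator $[\langle\nabla\rangle^N,u_0^1]\partial_z\ne{f}$, which depends on $\partial_v u_0^1 = -f_0$ and costs \emph{no} $\nabla_L$-derivative (since $\partial_z$ at order $N-1$ is bounded by order $N$), hence contributes $\lesssim \Vert f_0\Vert_{L^\infty H^N}\Vert \ne{f}\Vert_{L^2 H^N}^2 \lesssim \varepsilon^3 \nu^{-1/3}$; the coupling $\ne{u}\cdot\nabla_L f_0$ (which, since $f_0$ is $z$-independent, only involves the $v$-component of $\ne{u}$) and the quadratic term written in divergence form $\ne{u}\cdot\nabla_L\ne{f} = \nabla_L\cdot(\ne{f}\,\ne{u})$ (after integrating $\nabla_L$ onto the test function, both for the zero and nonzero mode contributions) are bounded by products of the shape $\Vert \ne{u}\Vert_{L^2 H^N}\,\Vert \nabla_L f\Vert_{L^2 H^N}\,\Vert \ne{f}\Vert_{L^\infty H^N} \lesssim \varepsilon \cdot \varepsilon\nu^{-1/2} \cdot \varepsilon = \varepsilon^3\nu^{-1/2}$. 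Closing the (quadratic) bootstrap requires these to be $\ll \varepsilon^2$, so they all close exactly when $\varepsilon \ll \nu^{1/2}$; the binding term is \eqref{ineq:CouetteTneq}. The inviscid damping estimate (iii) I would obtain from the Biot--Savart law $\ne{u} = -\nabla_L^\perp(-\Delta_L)^{-1}\ne{f}$ and the $L^2_t$-integrability of its kernel along the shear, $\int_0^\infty (k^2 + (\eta - kt)^2)^{-1}\,dt \lesssim k^{-2}$ (which already gives $\Vert \ne{u}\Vert_{L^2 H^N} \lesssim \Vert f_{\rm in}\Vert_{H^N}$ at the linear level, with \emph{no} loss of derivatives), propagated via a time-weighted energy estimate for $\psi_\neq$ in which the mean-flow transport again keeps its top-order cancellation.

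The step I expect to be hardest is the enhanced dissipation estimate (ii). The pointwise inequality $\nu\Vert \nabla_L\ne{f}\Vert_{H^N}^2 \gtrsim \nu^{1/3}\Vert \ne{f}\Vert_{H^N}^2$ is \emph{false} — it degenerates near the critical times $\eta = kt$ — so the $\nu^{1/3}$ gain is available only after integration in time; moreover a pure Duhamel/Schur-test argument is obstructed here because the mean-flow transport $u_0^1\partial_z\ne{f}$ is \emph{not} removed by the (solution-independent) change of variables \eqref{def:CouetteChange}, and treating it as a Duhamel source would require controlling $\partial_z\ne{f}$ in $L^2_tH^N$, i.e. the unaffordable factor $\nu^{-1/2}$. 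I would therefore establish (ii) by an energy estimate against a bounded, monotone, time-dependent Fourier multiplier $m(t,k,\eta)$ — in the spirit of the constructions in \cite{BCZ15,BCZGH15,BMV14} — chosen so that the energy identity for $\Vert m\ne{f}\Vert_{H^N}$ produces the good term $\nu^{1/3}\Vert \ne{f}\Vert_{H^N}^2$ on the left-hand side; since $m$ is a Fourier multiplier it commutes with $\partial_z$ and with $\nabla_L$, so the transport term keeps its top-order cancellation and the nonlinear terms are estimated exactly as above, again closing at $\varepsilon \ll \nu^{1/2}$. The real work throughout is the bookkeeping of \emph{which} norm absorbs the $\nu^{-1/2}$ in each nonlinear interaction — in particular the inviscid damping estimate (iii) is indispensable for the terms containing $\ne{u}$, which would otherwise only close at the worse threshold $\varepsilon \ll \nu^{2/3}$ — and this balance, which is the finite-regularity shadow of the difficulties encountered for Gevrey data in \cite{BM13,BMV14}, is what fixes the threshold at $\varepsilon \ll \nu^{1/2}$.
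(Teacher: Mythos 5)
Your high-level plan (pass to the $(z,v)$ variables, run a continuity argument, use a monotone Fourier multiplier to extract enhanced dissipation, and close the critical cubic term at $\varepsilon\ll\nu^{1/2}$) is the right one, and the shapes of the nonlinear estimates you write down match what the paper does for $\mathcal{T}_0$ and $\mathcal{T}_\neq$. The gap is in item (iii): you propose to propagate the integrated inviscid damping bound $\Vert\ne{u}\Vert_{L^2H^N}\lesssim\varepsilon$ as a \emph{separate} bootstrap quantity, obtained from the Biot--Savart law and the $L^1_t$-integrability of $(k^2+(\eta-kt)^2)^{-1}$, ``propagated via a time-weighted energy estimate for $\psi_\neq$.'' This is the one step that does not go through as written. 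The Biot--Savart argument
\begin{align*}
\Vert\ne{u}\Vert_{L^2_tH^N}^2 \;=\; \sum_{k\neq0}\int\Big(\int_0^T\frac{|\widehat f(t,k,\eta)|^2}{k^2+(\eta-kt)^2}\,dt\Big)\,\langle k,\eta\rangle^{2N}\,d\eta
\end{align*}
only closes against $\sup_t|\widehat f(t,k,\eta)|$ \emph{pointwise in the Fourier variable}, not against $\Vert f\Vert_{L^\infty_tH^N}$; Minkowski runs the wrong way here. At the linear level this is harmless because the moving-frame profile is time-independent, but at the nonlinear level a pointwise-in-frequency $L^\infty_t$ control is a genuinely stronger norm than the energy norm you are propagating, and it is exactly this mismatch that the paper is engineered to avoid. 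Nor is a ``time-weighted energy estimate for $\psi_\neq$'' a drop-in fix: weighting $\psi_\neq$ by powers of $\langle\partial_v-t\partial_z\rangle$ or $\langle t\rangle$ reintroduces Orr-critical derivative losses in precisely the cubic terms you need to control, and untangling this is the hard part of the problem (and of \cite{BM13,BMV14}).

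The paper's way around this is the key idea missing from your writeup: the single multiplier $M=M_1M_2$ is built so that the \emph{inviscid damping} is a free byproduct of the energy identity itself, via the ghost-weight mechanism. Condition \eqref{eq:M:cond:3} reads $-\dot M/M\ge |k|/\bigl(k^2+|\xi-kt|^2\bigr)$ for $k\neq0$, which on the Fourier side says exactly that the $\dot M$-budget term $\bigl\Vert\sqrt{-\dot M M}\langle D\rangle^N f\bigr\Vert_{L^2_tL^2}$ — which appears \emph{on the good side} of the energy identity, alongside the viscous term — dominates $\Vert\nabla_L^\perp\Delta_L^{-1}\ne{f}\Vert_{L^2_tH^N}=\Vert \ne{u}\Vert_{L^2_tH^N}$. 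Meanwhile $M_2$ and condition \eqref{eq:M:cond:5} produce the $\nu^{1/3}$-rate enhanced dissipation from the same budget, and $M\sim 1$ (condition \eqref{eq:M:cond:2}) keeps the norm equivalent to $H^N$. Thus there is no separate bootstrap quantity for $\ne{u}$, no separate multiplier, and no separate time-weighted estimate for $\psi_\neq$: the one energy estimate for $\Vert Af\Vert_{L^2}^2$ already contains both the inviscid damping and the enhanced dissipation simultaneously, and the commutator bounds \eqref{eq:M:cond:4} and \eqref{eq:M:cond:6} let you carry $M$ through the nonlinear terms. If you replace your proposed items (ii) and (iii) with this single multiplier-built energy functional, your remaining estimates (the $[\langle\nabla\rangle^N,u_0^1]\partial_z\ne{f}$ commutator at $\varepsilon^3\nu^{-1/3}$, and the $\ne{u}\cdot\nabla_Lf$ piece at $\varepsilon^3\nu^{-1/2}$) go through exactly as you wrote them, and they close at $\varepsilon\ll\nu^{1/2}$.
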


The norm we employ is based on a special, time-dependent Fourier multiplier which is designed to capture transient unmixing effects in the linearization and its effect on the nonlinear problem. This particular technique was first introduced in \cite{BM13}, although the norm we use is more similar to the 3D Sobolev regularity work \cite{BedrossianGermainMasmoudi15a}. 
We specifically employ a Fourier multiplier $M$ which obeys the properties:
\begin{subequations} \label{eq:Mconds}
  \begin{align}
  M(0,k,\xi)&=M(t, 0, \xi)=1 \label{eq:M:cond:1}  \\
  1&\ge M(t, k, \xi)\ge c \label{eq:M:cond:2} \\
  -\frac{\dot{M}}{M}&\ge\frac{\abs{k}}{k^2+|\xi-kt|^2} \quad \mbox{for } k\neq0  \label{eq:M:cond:3}\\
  \left|\frac{\partial_{\xi}M(k,\xi)}{M(k,\xi)}\right|  & \lesssim \frac{1}{|k|} \quad \mbox{for } k\neq0, \mbox{ uniformly in } \xi \label{eq:M:cond:4}\\
  1&\lesssim  \nu^{-1/6}\left(\sqrt{-\dot{M}M(t,k,\eta)}+\nu^{1/2}|k,\eta-kt|\right) \quad \mbox{for } k\neq0 \label{eq:M:cond:5} \\ 
  \sqrt{-\dot{M}M(t,k,\eta)} & \lesssim \brak{\eta-\xi} \sqrt{-\dot{M}M(t,k,\xi)}, \label{eq:M:cond:6}
  \end{align}
\end{subequations}
for a constant  $c \in (0,1)$  which is independent of $\nu$. 
The construction of such a multiplier $M$  is given in Appendix~\ref{sec:app} below and is similar to one used in \cite{BedrossianGermainMasmoudi15a} (some aspects of the multiplier also appeared in \cite{Zillinger2014}). See Lemma \ref{l01} in Appendix \ref{sec:app} for details. 

\begin{proof}[Proof of Theorem~\ref{T01}]
Defining 
\begin{align*} 
A=M\ang{D}^N,
\end{align*} 
where $\ang{D}=\sqrt{1+D^2}$, it follows from Plancherel's inequality and \eqref{eq:M:cond:2} that 
\begin{align}
\norm{f}_{H^N} \lesssim_{c} \| A(t) f\|_{L^2} \leq \|f\|_{H^N},\label{ineq:Aequiv}
\end{align}
and that $\norm{A(0)f}_2 = \norm{f}_{H^N}$.
It is important to note that such a simple equivalence to a standard norm (uniform in $\nu$ and $t$) has not been true in most previous works using similar Fourier multiplier-based techniques \cite{BM13,BMV14,BGM15I,BGM15II,BedrossianGermainMasmoudi15a}; the exception being \cite{Zillinger2014}. 
This distinction is one of the primary reasons that the proof of Theorem \ref{thm:main} is significantly less technical than the previous works (it also makes the technique essentially a Fourier-side analogue of Alinhac's ghost energy method \cite{Alinhac01}; see \cite{BedrossianGermainMasmoudi15a} for more discussion).  

By \eqref{ineq:Aequiv}, the proof of Theorem~\ref{T01} is then based on establishing the a priori estimate
\begin{align}
  \Vert Af\Vert_{L^{\infty}L^{2}} 
 +
  \nu^{1/2}\Vert \nabla_LAf\Vert_{L^2L^{2}} 
 +
  \left\Vert \sqrt{-\dot{M}M}\ang{D}^Nf\right\Vert_{L^2L^{2}}
  \leq 8 \varepsilon.
  \label{eq:T01:*}
  \end{align}
In turn, \eqref{eq:T01:*} implies \eqref{eq:T01:A} since $\| A f\|_{L^2}$ and $\|f\|_{H^N}$ are equivalent. Moreover, \eqref{eq:T01:B} follows from \eqref{eq:T01:*}, since \eqref{eq:M:cond:5} and \eqref{ineq:Aequiv} imply that for any $f$, 
\begin{align}
\norm{f_{\neq}}_{L^2H^N} & \lesssim \nu^{-1/6}\left(\nu^{1/2}\Vert \nabla_LAf_{\neq}\Vert_{L^2L^{2}} + \left\Vert \sqrt{-\dot{M} M}\ang{D}^N f\right\Vert_{L^2L^{2}}\right). \label{ineq:fneqdec}
\end{align}

By the local well-posedness in $H^N$ of the $2$D Navier-Stokes equations, for $t$ sufficiently small, there holds
\begin{align*}
\Vert Af\Vert_{L^{\infty}(0,t;L^{2})} + \nu^{1/2}\Vert \nabla_LAf\Vert_{L^2(0,t;L^{2})} + \left\Vert \sqrt{-\dot{M}M}\ang{D}^Nf\right\Vert_{L^2(0,t;L^{2})} < 2 \varepsilon, 
\end{align*} 
and all the quantities on the right hand side of \eqref{eq:T01:*} take values continuously in time. 
We will use a bootstrap argument to extend these estimates for all time. 
Hence, define  $T \leq \infty$ to be the maximal time such that
  \begin{align}
  \label{}
  \Vert Af\Vert_{L^{\infty}(0,T;L^{2})} + \nu^{1/2}\Vert \nabla_LAf\Vert_{L^2(0,T;L^{2})} + \left\Vert \sqrt{-\dot{M}M}\ang{D}^Nf\right\Vert_{L^2(0,T;L^{2})} \le8\varepsilon
  \label{eq:T01:**}
  \end{align}
holds; by the above discussion, $T > t$.  We refer to this inequality as the {\em bootstrap assumption}. We next prove that in fact \eqref{eq:T01:**} holds with the constant $8$ on the right side replaced by $4$, implying the global stability (i.e. that $T$ may be taken arbitrarily large).
\begin{proposition} \label{prop:CouetteBoot}
For all $N > 1$ and $\varepsilon$ sufficiently small (depending only on $N$), inequality \eqref{eq:T01:**} holds with the ``8'' replaced with a ``4'' on $[0,T)$, and hence by continuity, $T = +\infty$.  
\end{proposition}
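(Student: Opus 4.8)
The proof is a weighted $L^2$ energy estimate fed into the bootstrap. Because $M$, $\ang{D}^N$, and the operators $\nabla_L,\Delta_L$ of \eqref{def:gradL} are all (possibly time-dependent) Fourier multipliers in the $(z,v)$-frequency variables, $A = M\ang{D}^N$ commutes with each of them. Applying $A$ to \eqref{lnr-equ}, pairing with $Af$ in $L^2$, integrating the viscous term by parts, integrating by parts the transport term and using $\nabla_L\cdot u = 0$ to leave only a commutator, and using $\partial_t A = \dot M\ang{D}^N$ together with $\dot M\le 0$ (a consequence of \eqref{eq:M:cond:3} and \eqref{eq:M:cond:2}), we obtain the identity
\begin{equation*}
\tfrac12\frac{d}{dt}\norm{Af}_{L^2}^2 + \norm{\sqrt{-\dot M M}\ang{D}^Nf}_{L^2}^2 + \nu\norm{\nabla_L Af}_{L^2}^2 = -\left\langle [A,\,u\cdot\nabla_L]f,\; Af\right\rangle =: \mathcal{NL}(t).
\end{equation*}
Integrating on $[0,T)$ and using $\norm{A(0)f_{\rm in}}_{L^2} = \norm{f_{\rm in}}_{H^N} = \varepsilon$ (by \eqref{eq:M:cond:1}), the proposition reduces to the nonlinear bound $\int_0^T\abs{\mathcal{NL}(t)}\,dt \le \tfrac12\varepsilon^2$ under the bootstrap hypothesis \eqref{eq:T01:**}: this yields $\tfrac12\norm{Af(t)}_{L^2}^2 \le \varepsilon^2$, $\norm{\sqrt{-\dot M M}\ang{D}^Nf}_{L^2L^2}^2 \le \varepsilon^2$, and $\nu\norm{\nabla_L Af}_{L^2L^2}^2 \le \varepsilon^2$ for all $t<T$, so that the left side of \eqref{eq:T01:**} is at most $(\sqrt 2 + 1 + 1)\varepsilon < 4\varepsilon$.

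Thus everything rests on estimating $\mathcal{NL}$. I would split $f = f_0 + \ne{f}$ and $u = u_0 + \ne{u}$ as in \eqref{def:h0hneq}. The zero mode of the velocity, $u_0 = (-\partial_v\partial_{vv}^{-1}f_0,\,0)$, is a shear, so $u_0\cdot\nabla_L$ annihilates $z$-independent functions and makes no contribution in the commutator acting on $f_0$; moreover $M(t,0,\xi)\equiv 1$ by \eqref{eq:M:cond:1}, so the zero mode carries no $\sqrt{-\dot M M}$ gain. The remaining contributions I would split by a Littlewood--Paley paraproduct according to which factor of $u\cdot\nabla_L f$ has the higher frequency, giving the usual transport, reaction and remainder terms, and I would exploit three mechanisms. \textbf{(i)} When $u$ sits at low frequency, the commutator $[A,\,u_{\mathrm{lo}}\cdot\nabla_L]$ is lower order: the derivative on the transported factor is recovered from $\partial_\xi A$ via \eqref{eq:M:cond:4}, and the residual mismatch between $M$ at nearby frequencies is absorbed by $\norm{\sqrt{-\dot M M}\ang{D}^Nf}_{L^2}^2$ using \eqref{eq:M:cond:3} and \eqref{eq:M:cond:6}. \textbf{(ii)} Inviscid damping: since $u = \nabla_L^\perp(-\Delta_L)^{-1}f$ and $(-\Delta_L)^{-1}$ has symbol $(k^2 + \abs{\xi - kt}^2)^{-1}$, the nonzero modes $\ne{u}$ gain a factor $\ang{t}^{-1}$ (with an extra power for the component transverse to the shear), exactly as in \eqref{ineq:ID}--\eqref{ineq:psineqLin}, which makes the near-resonant time windows integrable. \textbf{(iii)} Enhanced dissipation: by \eqref{eq:M:cond:5}, the bootstrap control of $\nu^{1/2}\norm{\nabla_L A\ne{f}}_{L^2L^2}$ and $\norm{\sqrt{-\dot M M}\ang{D}^N\ne{f}}_{L^2L^2}$ upgrades to $\norm{\ne{f}}_{L^2H^N}\lesssim\varepsilon\nu^{-1/6}$ (cf.\ \eqref{ineq:fneqdec} and \eqref{eq:T01:B}), the only place negative powers of $\nu$ are generated. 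The forcing $(\ne{u}\cdot\nabla_L\ne{f})_0$ of the zero mode is handled by combining (ii), (iii), and the heat dissipation bound $\nu^{1/2}\norm{\partial_v Af_0}_{L^2L^2}\le 8\varepsilon$ contained in \eqref{eq:T01:**}.

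I expect the main obstacle to be the contribution to $\mathcal{NL}$ in which the high-frequency, $A$-weighted factor is a \emph{nonzero} mode of $f$ transported or reacted against a nonzero mode of $u$: this is the viscous analogue of the ``echo'' cascade responsible for the loss of regularity in the inviscid problem, and the reason \eqref{ineq:ID} cannot be closed directly at Sobolev regularity. The part of this loss that is not absorbed by the Cauchy-Kovalevskaya term $\norm{\sqrt{-\dot M M}\ang{D}^Nf}_{L^2}^2$ or by the $\ang{t}^{-1}$ decay of $\ne{u}$ must be paid for with the enhanced-dissipation gain on $\norm{\ne{f}}_{L^2H^N}$; bookkeeping the powers of $\nu$ this costs produces a bound of the schematic form $\int_0^T\abs{\mathcal{NL}}\,dt \lesssim \varepsilon^2\cdot\varepsilon\nu^{-1/2}$, and requiring that this be $\le\tfrac12\varepsilon^2$ is exactly what forces the threshold $\varepsilon\ll\nu^{1/2}$ of Theorem~\ref{T01}. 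Once this worst interaction is controlled, the zero-mode transport terms, the remainder terms, and the self-interaction of the zero mode are all strictly better and yield to the same three mechanisms; this improves the ``$8$'' in \eqref{eq:T01:**} to a ``$4$'', and hence $T = +\infty$ by continuity.
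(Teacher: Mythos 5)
Your scaffold is the paper's: ghost energy with the Cauchy--Kovalevskaya term $\|\sqrt{-\dot MM}\ang{D}^N f\|_{L^2}^2$, split of the velocity into zero and nonzero modes (with the observation that $u_0^x\partial_z$ annihilates $f_0$), a bootstrap to replace $8$ by $4$, and the correct final threshold $\varepsilon\ll\nu^{1/2}$. The divergence is in how the nonlinearity is estimated, and in where the powers of $\nu$ come from. The paper does \emph{not} use a Littlewood--Paley paraproduct decomposition; because one is in $H^N$ with $N>1$, the estimate for the nonzero-mode velocity term $\mathcal{T}_{\neq}$ is a single application of the $H^N$ algebra property, $|\mathcal{T}_\neq|\lesssim\|\nabla_L^\perp\Delta_L^{-1}f_\neq\|_{L^2H^N}\|\nabla_L f\|_{L^2H^N}\|Af\|_{L^\infty L^2}$, after which inviscid damping appears only as the pointwise Fourier bound $|\nabla_L^\perp\Delta_L^{-1}|\lesssim\sqrt{-\dot M/M}$ for $|k|\geq1$ (i.e.\ \eqref{eq:M:cond:3}), converting the first factor into the CK norm. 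The zero-mode velocity term $\mathcal{T}_0$ is genuinely a commutator, but it is estimated in one shot on the Fourier side by the mean-value theorem for the $\ang{D}^N$ piece and \eqref{eq:M:cond:4} for the $M$ piece, then Young's convolution inequality; no paraproduct machinery is needed. Your proposed route is plausible but heavier than what is required, and it obscures the bookkeeping. Concretely, the critical $\nu^{-1/2}$ in $\mathcal{T}_\neq\lesssim\varepsilon^3\nu^{-1/2}$ comes from the \emph{ordinary} viscous dissipation, $\|\nabla_L f\|_{L^2H^N}\lesssim\varepsilon\nu^{-1/2}$, paired with the CK gain on the velocity factor — not from the enhanced-dissipation bound $\|f_\neq\|_{L^2H^N}\lesssim\varepsilon\nu^{-1/6}$, which you present as ``the only place negative powers of $\nu$ are generated.'' The enhanced dissipation bound (cf.\ \eqref{ineq:fneqdec}) is in fact used only in the \emph{subcritical} term $\mathcal{T}_0$, where it yields $|\mathcal{T}_0|\lesssim\varepsilon(\varepsilon\nu^{-1/6})^2=\varepsilon^3\nu^{-1/3}$. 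You should also drop the mention of the transverse-component $\ang{t}^{-2}$ gain from \eqref{ineq:psineqLin}: the proof as written never exploits the distinction between $\partial_z\phi_\neq$ and $\partial_v^L\phi_\neq$, only the common $\sqrt{-\dot M/M}$ smoothing. None of this is fatal to your argument, but the paper's approach is strictly more elementary and the attribution of $\nu^{-1/2}$ to enhanced dissipation is a genuine miscount that would mislead someone trying to reproduce the estimate.
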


First, applying the operator $A$ to \eqref{lnr-equ}, we obtain
the energy estimate for $f$ which reads
  \begin{align*}
 &\frac{1}{2}\frac{d}{dt}\Vert Af(t)\Vert_{L^{2}}^2 
 +
  \nu\Vert \nabla_LAf(t)\Vert_{L^{2}}^2 
 +
  \left\Vert \sqrt{-\dot{M}M}\ang{D}^Nf(t)\right\Vert_{L^{2}}^2
  \nonumber\\&\qquad
  =
  -\int A(u(t)\cdot\nabla_Lf(t))Af(t) \,dV
  \end{align*}
where we use the notation $dV = dz\, dv$.
Integrating the above equality in time on the interval [0, T], we obtain (where we have abbreviated the time-interval from norms in order to reduce clutter)
  \begin{align*}
  &\frac{1}{2}\Vert Af(T)\Vert_{L^{2}}^2 
 +
  \nu\Vert \nabla_LAf\Vert_{L^2L^{2}}^2 
 +
  \left\Vert \sqrt{-\dot{M}M}\ang{D}^Nf\right\Vert_{L^2L^{2}}^2
  \nonumber\\&\qquad
  =
  \frac{1}{2}\Vert Af(0)\Vert_{L^{2}}^2 
  -\iint A(u\cdot\nabla_Lf)Af\,dVdt
  =\frac{1}{2}\Vert Af(0)\Vert_{L^{2}}^2-\mathcal{T}
  \end{align*}
where $\mathcal{T}=\iint A(u\cdot\nabla_Lf)Af\,dVdt.$
In order to estimate $\mathcal{T}$, which is the bulk of the proof, we decompose the velocity $u$ into the zero frequency and non-zero frequency parts as
  \begin{align}
  \label{}
  \mathcal{T}
  =
  \iint A(u_0^x\partial_z f)Af\,dVdt + \iint A(\ne{u}\cdot\nabla_Lf)Af\,dVdt
  =
  \mathcal{T}_0+\mathcal{T}_{\neq}.
  \end{align}
  
We first bound the term $\mathcal{T}_{\neq}$ using \eqref{eq:M:cond:2} and the algebra property of $H^N$ as
  \begin{align}
  \label{EQ301}
  |\mathcal{T}_{\neq}|
  &=
  \left| \iint A(\nabla_L^{\perp}\Delta_L^{-1}\ne{f}\cdot\nabla_Lf)Af\,dVdt \right| \notag\\
  &\lesssim
  \Vert \nabla_L^{\perp}\Delta_L^{-1}\ne{f}\Vert_{{L^2}H^{N}}\Vert \nabla_Lf\Vert_{L^{2}H^{N}}\Vert Af\Vert_{L^{\infty}L^{2}}.
  \end{align}
 From \eqref{EQ301}, the property \eqref{eq:M:cond:3} of $M$, and the fact that $\ne{f}$ contains only modes $|k| \geq 1$, we obtain the following, using the  bootstrap assumption \eqref{eq:T01:**}, 
  \begin{align}
  |\mathcal{T}_{\neq}|
  &\lesssim
  \left\Vert \sqrt{-\dot{M}M}\ang{D}^N\ne{f}\right\Vert_{L^{2}L^{2}}\Vert \nabla_Lf\Vert_{L^{2}H^{N}}\Vert Af\Vert_{L^{\infty}L^{2}}
  \nonumber\\&
  \lesssim \varepsilon^3\nu^{-1/2}. \label{ineq:CouetteTneq}
  \end{align}
This is consistent with Proposition \ref{prop:CouetteBoot} by choosing $\varepsilon \ll \nu^{1/2}$ (where the implicit constant depends only on $N$). This term, $\mathcal{T}_{\neq}$, is the only {\em critical} one for which we need the assumption $\varepsilon\ll\nu^{1/2}$. 

In order to estimate the zero frequency part, we note that $f_0$ is independent of $z$ and thus we may rewrite $\mathcal{T}_0$ as
  \begin{align}
  \label{EQ302}
  \mathcal{T}_0 
  & =  \iint A(-\partial_v\partial_v^{-2}f_0\, \partial_zf)Af\,dVdt = \iint A(-\partial_v\partial_v^{-2}f_0\, \partial_z\ne{f})A\ne{f}\,dVdt. 
  \end{align}
where we also used, by Plancherel's theorem, 
\begin{align*}
\int A(-\partial_v\partial_v^{-2}f_0\partial_z\ne{f})Af_0\,dV=0.
\end{align*}
Additionally, using the cancellation
\begin{align*}  
\int \partial_v\partial_v^{-2}f_0\partial_zA\ne{f}A\ne{f}\,dV =0,
\end{align*}
we obtain from \eqref{EQ302} that
\begin{align}
  \mathcal{T}_0 = \iint \Big(A(-\partial_v\partial_v^{-2}f_0\, \partial_z\ne{f})+\partial_v\partial_v^{-2}f_0\, \partial_zA\ne{f}\Big )A\ne{f}\,dVdt. \label{eq:T0Couette}
\end{align}
By Plancherel's theorem, 
  \begin{align}
  \label{EQ303}
 - \mathcal{T}_0
  =
 \frac{1}{4 \pi^2}\sum_{k\neq0}\int \!\!\! \int\!\!\! \int  \left(A(k,\xi)-A(k,\xi-\eta)\right)\frac{\eta}{\eta^2}\hat{f}(0,\eta)k\hat{f}(k,\xi-\eta)
  A(k,\xi)\bar{\hat{f}}(k,\xi)\,d\eta d\xi dt.
  \end{align}
We decompose the difference (a commutator in real variables) $A(k,\xi)-A(k,\xi-\eta)$ into two commutators, one due to $M$ and one due to $\langle D\rangle^N$: 
  \begin{align*}
  A(k,\xi)-A(k,\xi-\eta)
  &=
  M(k,\xi)\left( (1+k^2+\xi^2)^{N/2}-(1+k^2+(\xi-\eta)^2)^{N/2}\right)
 \nonumber\\&\qquad 
  +
  (M(k,\xi)-M(k,\xi-\eta))(1+k^2+(\xi-\eta)^2)^{N/2}
  \nonumber\\&
  =
  {\rm com_1+com_2}.
  \end{align*}
For ${\rm com_1}$, we use the mean value theorem to obtain, for some $\theta \in [0,1]$, 
  \begin{align}
  |{\rm com}_1|
  &=
  \left|M(k,\xi)\frac{N}{2}(1+k^2+(\xi-\theta\eta)^2)^{N/2-1})2(\xi-\theta\eta)\eta\right|
  \nonumber\\&
  \lesssim
  \left((1+k^2+(\xi-\eta)^2)^{(N-1)/2}+(1+k^2+\xi^2)^{(N-1)/2}\right)|\eta|.
    \label{eq:com:1}
  \end{align}
To estimate ${\rm com_2}$, we additionally recall that the multiplier $M$  satisfies \eqref{eq:M:cond:4}.
From \eqref{eq:M:cond:4} and the mean value theorem we deduce
  \begin{align}
  \label{}
  |{\rm com_2}|
  \lesssim
  \frac{|\eta|}{|k|}(1+k^2+(\xi-\eta)^2)^{N/2}.
  \label{eq:com:2}
  \end{align}
Combining \eqref{EQ303} with the bounds \eqref{eq:com:1} and \eqref{eq:com:2} for ${\rm com_1}$ and ${\rm com_2}$ respectively, we arrive at
  \begin{align*}
  |\mathcal{T}_0|
  &\lesssim
  \sum_{k\neq0}\int \!\!\! \int\!\!\! \int \left((1+k^2+(\xi-\eta)^2)^{N/2}+(1+k^2+\xi^2)^{N/2}\right)
  \nonumber\\&\qquad \qquad 
  \times |\hat{f}(0,\eta)\hat{f}(k,\xi-\eta)|
  |A(k,\xi)\bar{\hat{f}}(k,\xi)|\,d\eta d\xi dt.
  \end{align*}
By Young's convolution inequality, and $N> 1$, we obtain, 
  \begin{align} 
  |\mathcal{T}_0| \lesssim \Vert f_0\Vert_{L^{\infty}H^{N}}\Vert f_{\neq}\Vert_{L^{2}H^{N}}^2 .\label{ineq:T0postCommCouette}
  \end{align}
By \eqref{ineq:fneqdec} and the bootstrap assumption \eqref{eq:T01:**}, we obtain
  \begin{align*}
  |\mathcal{T}_0| &\lesssim \|f_0\|_{L^\infty H^N} \nu^{-1/3} \left( \| \sqrt{-\dot{M}M} \langle D\rangle^N f_{\neq} \|_{L^2 L^2}+ \nu^{1/2} \| \nabla_L A f_{\neq}\|_{L^2 L^2} \right)^2\notag\\
  &\lesssim \varepsilon^3\nu^{-1/3},   
  \end{align*}
which is sufficient to deduce Proposition \ref{prop:CouetteBoot} under the hypotheses $\varepsilon \ll \nu^{-1/3}$.
This completes the proof of Proposition \ref{prop:CouetteBoot} and hence of Theorem \ref{T01}.    
\end{proof}

\section{Shear flows close to Couette}
\label{S4}
In this section we consider the more general class of shear flows with initial data $(U(y),0)$ which are sufficiently close to Couette, in the sense that
  \begin{align}
  \label{EQ400}
  \Vert U'-1\Vert_{H^s}+\Vert U''\Vert_{H^s}\le \delta\ll 1
  \end{align}
for some $s\ge2+N$. 
  \begin{remark}
  Recall from Theorem \ref{thm:main} that the constant $\delta$ is independent of $\varepsilon$ and $\nu$.
  \end{remark}
Recall the definition of $\bar{U}$ from \eqref{def:barU}, the solution of the Navier-Stokes equations with the initial datum $(U(y),0)$.   
From standard estimates on the heat equation there holds, 
\begin{subequations} 
 \begin{align}
  \label{}
 \sup_{t>0} \Vert \bar{U}'(t,\cdot)-1\Vert_{H^s}
 & \le
  \Vert U'-1\Vert_{H^s} \\ 
 \sup_{t>0} \Vert \bar{U}''(t,\cdot) \Vert_{H^s}
  & \le
  \Vert U''\Vert_{H^s} \\ 
  \Vert \bar{U}''\Vert_{L^2_tH^s_y}  & \lesssim \delta  \nu^{-1/2}. \label{ineq:Uppdecay}
  \end{align}
\end{subequations}

\subsection{Coordinate System} 
Recall that $\bar{U}(t) = e^{\nu t \partial_{yy}}U$ is the decaying background shear flow.  
Instead of \eqref{def:CouetteChange} we need to work with a coordinate change adapted to this shear flow, and thus consider 
\begin{subequations} \label{def:zv} 
\begin{align}
x \mapsto z & = x - t\bar{U}(t,y) \\ 
y \mapsto v & = \bar{U}(t,y). 
\end{align}
\end{subequations}
For $\delta$ sufficiently small, the map $(x,y) \mapsto (x,v)$ is invertible; see ~Subsection~\ref{sec:equivalence} below.
The choice $y \mapsto v$ is made so that $\partial_y \mapsto \partial_y\bar{U}\left(\partial_v - t\partial_z\right)$ (see \eqref{deri:1} below). 
This ensures that the critical times are not significantly perturbed by the coefficients, and featured in all of the previous works on Navier-Stokes or Euler near Couette flow, introduced first in \cite{BM13}.  
The change from $x \mapsto z$ is essentially rewinding by the characteristics of the shear flow, although in way which is also well-adjusted for the requirement on $v$ (it is also well-adjusted for the parabolic decay of $\bar{U}(t,y)$, although this is less obvious). 

In the new coordinate system we define quantities corresponding to vorticity, stream function, and  velocity, respectively, by
  \begin{align}
  \label{trans:cdnt}
  f(t, z(t,x,y), v(t,y)) &=\omega(t, x, y)
  \nonumber\\
  \phi(t, z(t,x,y), v(t,y)) &=\psi(t, x, y)
  \nonumber\\
  u(t,z(t,x,y),v(t,y)) & =\vv(t,x,y).
\end{align}
It is also convenient to denote the spatial derivatives of the shear flow in the new coordinates
\begin{subequations} \label{def:ab}
\begin{align}
  a(t,v(t,y))&=\bar{U}'(t,y) \\
  b(t,v(t,y))&=\bar{U}''(t,y).
\end{align} 
\end{subequations}
For any function $\tilde h$ in the $(x,y)$ coordinates, the corresponding function $h$ in the $(z,v)$ coordinates 
\begin{align*}
 h(t,z,v)=\tilde h(t,x,y),
\end{align*} 
the differential operator $\nabla \mapsto \nabla_t$, is defined by
\begin{align}
  \label{deri:1}
  \nabla \tilde{h}(t, x, y)
  =
    \begin{pmatrix}
    \partial_x \tilde h\\
    \partial_y \tilde h
    \end{pmatrix}
  =
    \begin{pmatrix}
    \partial_z h\\
    a(\partial_v-t\partial_z) h
    \end{pmatrix}
   =
    \begin{pmatrix}
    \partial^t_z h\\
    \partial^t_v h
    \end{pmatrix}
   =\nabla_t h, 
\end{align} 
and analogously $\Delta \mapsto \Delta_t$ is defined via, 
\begin{align}
\Delta \tilde{h}(t,x,y) = (\partial_z^2+a^2\partial^L_{vv}+b\partial^L_{v})h = \Delta_t h. \label{def:Deltat}
\end{align}
Finally, the time derivative maps to the following, using $\partial_t \bar{U} = \nu \partial_{yy}\bar{U}$ and  the definitions of $a,b$ from \eqref{def:ab}, 
\begin{align}
\partial_t\tilde{h} & = \partial_t h + \partial_z h \left(-\bar{U} - t \partial_t \bar{U}\right) + \partial_v h \partial_t \bar{U} \nonumber \\  
& = \partial_t h - v \partial_z h + \nu b(\partial_v - t\partial_z)h. \label{def:dth}
\end{align}
Recall that \eqref{def:gradL} is the ``linear part'' of the operator $\nabla_t$. 
The relationship between $f$ and $\phi$ is given through \eqref{def:Deltat}: 
  \begin{align}
  \label{deri:3}
  f&=\Delta_t \phi=(\partial_z^2+a^2\partial^L_{vv}+b\partial^L_{v})\phi
  =(\partial_z^2+a^2(\partial_{v}-t\partial_z)^2+b(\partial_{v}-t\partial_z))\phi
  \nonumber\\&
  =\Delta_L \phi + ((a^2-1)\partial^L_{vv}+b\partial^L_{v})\phi.
  \end{align}
Note that by the chain rule, we have
  \begin{align}
  \label{g:b}
  b=a \partial_v a.    
  \end{align}
Using the above notations, by applying \eqref{def:gradL}, \eqref{def:Deltat}, and \eqref{def:dth} we find that $f$ satisfies the equations
\begin{subequations} \label{eq:fsys}  
\begin{align}
  &\partial_tf + u\cdot\nabla_tf=b\partial_z\phi+\nu\tilde{\Delta}_tf
  \label{vort}\\&
  \Delta_t\phi=f
  \label{pote}\\&
  u =  \nabla^{\perp}_t\phi
  \label{velo},
  \end{align}
\end{subequations}
 where the modified Laplace operator is given by 
 \begin{align*}
 \tilde{\Delta}_t=\partial_{z}^2+a^2\partial_{vv}^L = \Delta_L + (a^2 -1) \partial_{vv}^L.
 \end{align*} 
 In particular, note the cancellation between the lower order term in $\Delta_t$
and the last term in \eqref{def:dth}. 
 
\begin{remark} 
Recall the notations in \eqref{def:h0hneq}. As $\bar{U}$ is independent of $x$, there holds
\begin{align*}  
(\nabla_t {h})_0=\nabla_t{h}_0\comma
  \ne{(\nabla_t {h})}=\nabla_t \ne{{h}}.
\end{align*}
As a result, from the divergence free condition we deduce that $u_0^y = 0$. 
\end{remark}

\subsection{Equivalence of the two coordinate systems}
\label{sec:equivalence}
We shall prove Theorem \ref{thm:main} in the new coordinate system \eqref{def:zv}. Here we discuss how to relate this coordinate system to the original $(x,y)$. 
The first lemma, the proof of which can be found in \cite{InciKappelerTopalov13}, provides a composition inequality in fractional Sobolev spaces. 
\begin{lemma}[Fractional Sobolev Composition]
\label{diff:back}
Let $s'>2$, $s'\ge s\ge0$, $f\in H^{s}(\mathbb{R})$, and $g\in H^{s'}(\mathbb{R})$ be such that $\Vert g\Vert_{H^{s'}} \le \delta$. Then, there holds
\begin{align*}
  \Vert f\circ(I+g)\Vert_{H^s} \leq C_{s,s'}(\delta) \Vert f\Vert_{H^s},
\end{align*}
where the implicit constant obeys $C_{s,s'}(\delta) \to 1$ as $\delta\to 0$.
\end{lemma}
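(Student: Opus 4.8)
The plan is to write $\varphi := I+g$ and exploit that, for $\delta$ small, $\varphi$ is an increasing $C^1$-diffeomorphism of $\RR$ with $\Vert\varphi'-1\Vert_{L^\infty}\le C_{s'}\delta$ (Sobolev embedding $H^{s'-1}(\RR)\hookrightarrow L^\infty$, valid since $s'-1>\tfrac12$) and $|\varphi^{-1}(\xi)-\varphi^{-1}(\eta)|\in\big[(1+C\delta)^{-1},(1-C\delta)^{-1}\big]\,|\xi-\eta|$ for all $\xi,\eta$. The proof then proceeds in three stages: (a) the range $0\le s\le 1$ via the \emph{exact} change-of-variables formula in the Gagliardo seminorm; (b) the integer values $s=m$ with $2\le m\le \lfloor s'\rfloor$ by induction on $m$ using the chain rule; (c) the remaining non-integer $s$ by complex interpolation between consecutive integers.

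For stage (a), write $\Vert f\circ\varphi\Vert_{H^s}^2=\Vert f\circ\varphi\Vert_{L^2}^2+c_s\iint\frac{|f(\varphi(x))-f(\varphi(y))|^2}{|x-y|^{1+2s}}\,dx\,dy$. Substituting $\xi=\varphi(x)$, $\eta=\varphi(y)$ and using the two-sided bounds above on the Jacobians $(\varphi^{-1})'$ and on the quantity $|\varphi^{-1}(\xi)-\varphi^{-1}(\eta)|^{-(1+2s)}$, one obtains $\Vert f\circ\varphi\Vert_{H^s}^2\le\max\{(1-C\delta)^{-1},(1+C\delta)^{1+2s}(1-C\delta)^{-2}\}\,\Vert f\Vert_{H^s}^2$, and this $\max\to 1$ as $\delta\to0$. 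Crucially this uses no regularity of $f$ beyond $H^s$ and no regularity of $g$ beyond $g'\in L^\infty$, which is why it covers the low-regularity range $s<3/2$ where $f'$ need not be bounded.

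For stage (b), the chain rule gives $\partial(f\circ\varphi)=(f'\circ\varphi)\varphi'=(f'\circ\varphi)+(f'\circ\varphi)g'$, and, using $\Vert u\Vert_{H^m}^2=\Vert u\Vert_{L^2}^2+\Vert u'\Vert_{H^{m-1}}^2$, we get $\Vert f\circ\varphi\Vert_{H^m}^2=\Vert f\circ\varphi\Vert_{L^2}^2+\Vert (f'\circ\varphi)\varphi'\Vert_{H^{m-1}}^2$. The first term is controlled by stage (a). For the second, $\Vert f'\circ\varphi\Vert_{H^{m-1}}\le C_{m-1,s'}(\delta)\Vert f'\Vert_{H^{m-1}}$ by the inductive hypothesis (or by stage (a) when $m=2$; note $m-1\le s'$), while $\Vert (f'\circ\varphi)g'\Vert_{H^{m-1}}\lesssim\delta\,\Vert f'\Vert_{H^{m-1}}$ by the Kato--Ponce/Moser product estimate $\Vert uv\Vert_{H^{m-1}}\lesssim\Vert u\Vert_{L^\infty}\Vert v\Vert_{H^{m-1}}+\Vert v\Vert_{L^\infty}\Vert u\Vert_{H^{m-1}}$ with $u=f'\circ\varphi$ (using $f'\in H^{m-1}\hookrightarrow L^\infty$ since $m-1\ge1$) and $v=g'\in H^{s'-1}\hookrightarrow H^{m-1}\cap L^\infty$ with $\Vert g'\Vert_{H^{m-1}}\le\Vert g\Vert_{H^{s'}}\le\delta$ (using $m\le s'$). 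Hence $\Vert (f'\circ\varphi)\varphi'\Vert_{H^{m-1}}\le (C_{m-1,s'}(\delta)+C\delta)\Vert f'\Vert_{H^{m-1}}$, and combining with $\Vert f\Vert_{L^2}^2+\Vert f'\Vert_{H^{m-1}}^2=\Vert f\Vert_{H^m}^2$ yields $\Vert f\circ\varphi\Vert_{H^m}^2\le\max\{C_{0,s'}(\delta)^2,(C_{m-1,s'}(\delta)+C\delta)^2\}\Vert f\Vert_{H^m}^2$, closing the induction with constant $\to1$ (the discrepancy between this convention and the Fourier-based $H^m$ norm is $O(\delta)$ and absorbed routinely).

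For stage (c), if $\lceil s\rceil\le\lfloor s'\rfloor$ then $[H^{\lfloor s\rfloor},H^{\lceil s\rceil}]_{\{s\}}=H^s$ and multiplicativity of operator norms under complex interpolation give $C_{s,s'}(\delta)\le C_{\lfloor s\rfloor,s'}(\delta)^{1-\{s\}}C_{\lceil s\rceil,s'}(\delta)^{\{s\}}\to1$. The leftover sliver $s\in(\lfloor s'\rfloor,s']$ (only when $s'\notin\ZZ$) is handled by treating $s=s'$ directly: expand $\partial^{\lfloor s'\rfloor}(f\circ\varphi)$ by Fa\`a di Bruno, estimate the leading term $(f^{(\lfloor s'\rfloor)}\circ\varphi)(\varphi')^{\lfloor s'\rfloor}$ by stage (a) plus a fractional multiplication estimate for the $O(\delta)$ correction $(\varphi')^{\lfloor s'\rfloor}-1\in H^{s'-1}$, and bound every lower-order term (each carrying a factor $g^{(k)}\in H^{s'-k}$ with $\Vert g^{(k)}\Vert_{H^{s'-k}}\lesssim\delta$) by fractional Moser estimates, then interpolate on $[\lfloor s'\rfloor,s']$; this range is not needed for the applications of Lemma~\ref{diff:back} in the present paper, where $s<s'$ strictly. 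The main obstacle throughout is not any individual estimate but ensuring the constant tends to $1$ rather than merely staying bounded: this is precisely why stage (a) uses the exact change of variables (so that the constant is literally a power of $1\pm C\delta$) and why each inductive step must be arranged so the new constant is the \emph{maximum}, not the sum, of two near-one quantities --- which in turn rests on the Pythagorean splitting $\Vert f\Vert_{H^m}^2=\Vert f\Vert_{L^2}^2+\Vert f'\Vert_{H^{m-1}}^2$. An alternative, fully equivalent route is the one of \cite{InciKappelerTopalov13}.
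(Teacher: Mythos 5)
The paper does not prove this lemma at all: it is quoted as a known result, with the proof attributed to \cite{InciKappelerTopalov13}. So any self-contained argument is necessarily a ``different route,'' and your three-stage plan (exact change of variables at low regularity, chain-rule induction at integer regularity, interpolation in between) is the standard and essentially correct way to make the statement self-contained; stages (b) and (c) in particular are sound, and the restriction $s'>2$ is used exactly where you use it (to put $g'$ and $f'$ in $L^\infty$).

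There is, however, one genuine flaw in the bookkeeping of the refined claim $C_{s,s'}(\delta)\to 1$, which is the only nontrivial content of the lemma beyond boundedness. Your stage (a) proves that the operator norm of $f\mapsto f\circ\varphi$ tends to $1$ with respect to the \emph{Gagliardo} norm $\Vert f\Vert_{L^2}^2+c_s[f]_{s}^2$, and stage (b) with respect to the recursive norm $\Vert f\Vert_{L^2}^2+\Vert f'\Vert_{H^{m-1}}^2$. Neither of these coincides with the Fourier norm $\int (1+|\xi|^2)^s|\hat f|^2\,d\xi$; they are equivalent to it only up to fixed multiplicative constants (for the recursive convention the weight is $\sum_j |\xi|^{2j}$ rather than $(1+|\xi|^2)^m=\sum_j\binom{m}{j}|\xi|^{2j}$, a discrepancy of size $\binom{m}{\lfloor m/2\rfloor}$, not $O(\delta)$ as your parenthetical asserts). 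An operator-norm bound of the form $1+O(\delta)$ does \emph{not} survive passage to an equivalent norm, so as written you have only proved uniform boundedness of $C_{s,s'}(\delta)$, not convergence to $1$. The repair is short: for integer $m$ your chain-rule argument already has the structure ``$\partial^j(f\circ\varphi)=(f^{(j)}\circ\varphi)(\varphi')^{j}+O_{L^2}(\delta\Vert f\Vert_{H^m})$,'' so running it termwise in the binomially weighted (i.e.\ exact Fourier) convention gives $1+O(\delta)$ there; for all non-integer $s$, including $0<s<1$, use that $[H^{s_0},H^{s_1}]_\theta=H^{(1-\theta)s_0+\theta s_1}$ holds with \emph{equality of the Fourier norms} (Stein--Weiss interpolation of weighted $L^2$ spaces), so the interpolated operator norm is again $1+O(\delta)$. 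This makes the Gagliardo computation of stage (a) unnecessary — which is just as well, since the Gagliardo representation you invoke degenerates at both endpoints $s=0$ and $s=1$ of the range you assign to it. Also note that the paper applies the lemma with $s$ possibly equal to $s'$, so the ``leftover sliver'' $s\in(\lfloor s'\rfloor,s']$ that you only sketch cannot simply be discarded.
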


The next implicit function theorem is important for inverting the coordinate transformation \eqref{def:zv} and carrying the information back to the original coordinates. 
\begin{lemma}[Implicit Function Theorem]
\label{implicit}
Let $s>2$. There exists an $\varepsilon_0=\varepsilon_0(s)$ such that if $\Vert \alpha\Vert_{H^s}\le\varepsilon_0$, then there is a unique solution $\beta$ to 
\begin{align*}
  \beta(y)=\alpha(y+\beta(y)),
\end{align*}
with $\Vert \beta\Vert_{H^s}\lesssim\varepsilon_0$.
\end{lemma}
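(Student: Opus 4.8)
The plan is to prove Lemma~\ref{implicit} by a standard contraction-mapping argument in $H^s(\mathbb{R})$, using that $H^s$ is a Banach algebra for $s>1/2$ (and in particular for $s>2$), together with the composition estimate of Lemma~\ref{diff:back}. Define the map $\Phi(\beta)(y) = \alpha(y+\beta(y)) = \alpha\circ(I+\beta)(y)$ on the closed ball $B_R = \{\beta \in H^s(\mathbb{R}) : \|\beta\|_{H^s}\le R\}$, where $R = R(s)$ is a fixed multiple of $\varepsilon_0$ to be chosen; a fixed point of $\Phi$ is exactly the desired solution. The two things to check are that $\Phi$ maps $B_R$ into itself and that $\Phi$ is a contraction there.

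For the self-mapping property, apply Lemma~\ref{diff:back} with $f = \alpha$ and $g = \beta$: since $\|\beta\|_{H^s}\le R \le \varepsilon_0$ (taking $R$ no larger than the $\delta$-threshold of that lemma), we get $\|\Phi(\beta)\|_{H^s} = \|\alpha\circ(I+\beta)\|_{H^s} \le C_{s,s}(R)\|\alpha\|_{H^s} \le C_{s,s}(R)\varepsilon_0$. Since $C_{s,s}(\delta)\to 1$ as $\delta\to 0$, we may first fix $R$ so that $C_{s,s}(R)\le 2$, and then shrink $\varepsilon_0$ (relative to $R$, say $\varepsilon_0 \le R/2$) so that $C_{s,s}(R)\varepsilon_0 \le R$; this gives $\Phi(B_R)\subseteq B_R$. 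For the contraction estimate, write, for $\beta_1,\beta_2 \in B_R$,
\begin{align*}
\Phi(\beta_1)(y) - \Phi(\beta_2)(y) = \alpha(y+\beta_1(y)) - \alpha(y+\beta_2(y)) = \left(\int_0^1 \alpha'\big(y + \beta_2(y) + \theta(\beta_1(y)-\beta_2(y))\big)\,d\theta\right)(\beta_1(y)-\beta_2(y)).
\end{align*}
Bounding the $H^s$ norm of the product by the algebra property, the first factor is controlled by $\sup_{\theta\in[0,1]}\|\alpha'\circ(I+\beta_2+\theta(\beta_1-\beta_2))\|_{H^s}$, which by Lemma~\ref{diff:back} (noting $\|\beta_2 + \theta(\beta_1-\beta_2)\|_{H^s}\le R$ and $\alpha'\in H^{s-1}$, so one applies the lemma at regularity $s-1$ with $s'=s>2$) is $\lesssim_s \|\alpha'\|_{H^{s-1}} \le \|\alpha\|_{H^s} \le \varepsilon_0$. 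Hence $\|\Phi(\beta_1)-\Phi(\beta_2)\|_{H^s} \le C_s\,\varepsilon_0\,\|\beta_1-\beta_2\|_{H^s}$, and choosing $\varepsilon_0$ small enough that $C_s\varepsilon_0 \le 1/2$ makes $\Phi$ a contraction. The Banach fixed point theorem then yields a unique $\beta\in B_R$ with $\Phi(\beta)=\beta$, and the bound $\|\beta\|_{H^s}\le R \lesssim \varepsilon_0$ follows. Uniqueness in the full space (not just in $B_R$) follows from the contraction estimate: any two solutions with small $H^s$ norm both lie in $B_R$ for $\varepsilon_0$ small.

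The main technical point to be careful about — and the only place where something could go wrong — is the regularity bookkeeping in the contraction step: $\alpha \in H^s$ only gives $\alpha' \in H^{s-1}$, so one needs $s-1 > 1/2$ for the algebra property and one needs $s > 2$ (rather than just $s-1 > 2$) to invoke Lemma~\ref{diff:back} in the form stated, which requires the \emph{outer} Sobolev index of the composition to exceed $2$; here the composed function's controlling index is $s$ itself through the $(I+g)$ factor, so the hypothesis $s>2$ is exactly what is needed and there is no loss. A minor alternative, avoiding the differentiation of $\alpha$ entirely, is to estimate $\|\alpha\circ(I+\beta_1) - \alpha\circ(I+\beta_2)\|_{H^s}$ directly via a composition-difference version of Lemma~\ref{diff:back}; but the mean-value-theorem approach above is cleaner given exactly the tools already stated in the excerpt.
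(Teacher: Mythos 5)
The paper states Lemma~\ref{implicit} without proof, so there is no in-text argument to compare against; I will just assess correctness. Your contraction-mapping strategy is the natural one, and the self-mapping step is fine: applying Lemma~\ref{diff:back} with $R$ small enough that $C_{s,s}(R)\le 2$ and then taking $\varepsilon_0\le R/2$ does give $\Phi(B_R)\subseteq B_R$.

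The contraction step has a genuine gap. Your mean-value representation writes $\Phi(\beta_1)-\Phi(\beta_2)=A\cdot(\beta_1-\beta_2)$ with $A(y)=\int_0^1 \alpha'\bigl(y+\beta_2(y)+\theta(\beta_1(y)-\beta_2(y))\bigr)\,d\theta$. Applying Lemma~\ref{diff:back} at regularity $s-1$, as you do, controls $\Vert A\Vert_{H^{s-1}}$, \emph{not} $\Vert A\Vert_{H^s}$. With only $A\in H^{s-1}$ and $\beta_1-\beta_2\in H^s$, the algebra property gives an $H^{s-1}$ bound on the product, and the claimed inequality $\Vert\Phi(\beta_1)-\Phi(\beta_2)\Vert_{H^s}\le C_s\varepsilon_0\Vert\beta_1-\beta_2\Vert_{H^s}$ does not follow. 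This is not merely a bookkeeping annoyance: for $\alpha\in H^s$ one should not expect a top-order Lipschitz estimate for $\beta\mapsto\alpha\circ(I+\beta)$, since the composition map $\varphi\mapsto u\circ\varphi$ on $H^s$ is continuous but in general not Lipschitz unless $u$ is one derivative smoother (this is a point emphasized in the reference behind Lemma~\ref{diff:back}, namely \cite{InciKappelerTopalov13}). Your closing remark that ``there is no loss'' therefore overlooks exactly the place where the loss occurs.

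The repair is standard and preserves your architecture: contract in a weaker norm while keeping the iterates in the $H^s$ ball. Run $\beta_{n+1}=\Phi(\beta_n)$, $\beta_0=0$; the self-mapping step keeps $\beta_n\in B_R\subset H^s$. Your mean-value argument, read at the level where it is actually valid, gives $\Vert\Phi(\beta_1)-\Phi(\beta_2)\Vert_{H^{s-1}}\le C_s\varepsilon_0\Vert\beta_1-\beta_2\Vert_{H^{s-1}}$, since $s-1>1$ so $H^{s-1}$ is an algebra and Lemma~\ref{diff:back} applies with $f=\alpha'\in H^{s-1}$ and $s'=s>2$. Hence $(\beta_n)$ is Cauchy in $H^{s-1}$, with a limit $\beta$; as $B_R$ is closed, bounded, and convex in $H^s$ it is weakly closed, so $\beta\in B_R\subset H^s$. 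Convergence in $H^{s-1}$ implies uniform convergence, and $\alpha$ is continuous, so one may pass to the limit pointwise in the fixed-point relation to get $\Phi(\beta)=\beta$. Uniqueness among small-$H^s$ solutions follows from the $H^{s-1}$ contraction. With this modification your proof is correct.
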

The next lemma shows that we are allowed to take all the information in the original system to the new system.
\begin{lemma}
\label{diff:to}
Let $s'>2$, $s'\ge s\ge0$, $f\in H^{s}(\mathbb{R}^2)$, and $g\in H^{s'}(\mathbb{R}^2)$ be such that $\Vert g\Vert_{H^{s'}} \le \delta$. Then, there holds
\begin{align*}
  \Vert f\Vert_{H^s} \leq C_{s,s'}(\delta) \Vert f\circ(I+g)\Vert_{H^s},
\end{align*}
where the implicit constant obeys $C_{s,s'}(\delta) \to 1$  as $\delta \to 0$.
\end{lemma}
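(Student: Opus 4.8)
The plan is to reduce Lemma~\ref{diff:to} to Lemma~\ref{diff:back} by inverting the change of variables and applying the composition estimate in the reverse direction. Write $\Phi = I + g : \mathbb{R}^2 \to \mathbb{R}^2$. For $\delta = \Vert g\Vert_{H^{s'}}$ sufficiently small, $\Phi$ is a bi-Lipschitz diffeomorphism of $\mathbb{R}^2$, and by Lemma~\ref{implicit} (applied componentwise, or by a standard Banach fixed point argument on $C^1_b$) its inverse can be written as $\Phi^{-1} = I + \tilde g$ with $\tilde g \in H^{s'}$ and $\Vert \tilde g\Vert_{H^{s'}} \le C(\delta)\delta =: \tilde\delta$, where $\tilde\delta \to 0$ as $\delta \to 0$. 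The key algebraic identity is then
\begin{align*}
f = (f \circ \Phi) \circ \Phi^{-1} = (f\circ\Phi)\circ(I + \tilde g).
\end{align*}

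Given this, I would apply Lemma~\ref{diff:back} with the roles played by $f \circ \Phi$ (in place of the ``$f$'' of that lemma) and $\tilde g$ (in place of its ``$g$''). Since $\Vert \tilde g\Vert_{H^{s'}} \le \tilde\delta$ can be made as small as we like by taking $\delta$ small, Lemma~\ref{diff:back} yields
\begin{align*}
\Vert f\Vert_{H^s} = \Vert (f\circ\Phi)\circ(I+\tilde g)\Vert_{H^s} \le C_{s,s'}(\tilde\delta)\,\Vert f\circ\Phi\Vert_{H^s} = C_{s,s'}(\tilde\delta)\,\Vert f\circ(I+g)\Vert_{H^s},
\end{align*}
and setting the new implicit constant to be $C_{s,s'}(\tilde\delta)$, which tends to $1$ as $\delta\to0$ because $\tilde\delta\to0$ and $\delta\mapsto\tilde\delta(\delta)$ is continuous with $\tilde\delta(0)=0$, gives exactly the claimed bound. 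One should note that $f\in H^s$ is assumed from the outset, so the left side is finite; alternatively, one can run the argument on the dense subspace of Schwartz functions and pass to the limit, using that $f\mapsto f\circ\Phi$ is bounded $H^s\to H^s$ (which is itself Lemma~\ref{diff:back}).

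The main obstacle — really the only substantive point — is establishing that the inverse map $\Phi^{-1}$ has the form $I + \tilde g$ with $\tilde g$ controlled in $H^{s'}$ by a quantity vanishing with $\delta$. Lemma~\ref{implicit} is stated in one dimension and for the scalar equation $\beta(y) = \alpha(y+\beta(y))$; for the 2D tensor setting here one needs either a vector-valued analogue (the fixed-point argument $\beta = g\circ(I+\beta)$ in $H^{s'}(\mathbb{R}^2;\mathbb{R}^2)$ is contractive for $\delta$ small because, by $s' > 2 = \dim/1$... more precisely $s' > d/2 + 1$ with $d=2$, so $H^{s'}$ is an algebra and composition/multiplication are controlled), or one observes that in the actual application $g$ has the triangular structure coming from \eqref{def:zv} so the inversion decouples. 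I would carry out the vector fixed-point argument: define $T\beta = g\circ(I+\beta)$ on the ball $\{\Vert\beta\Vert_{H^{s'}}\le 2\delta\}$, use Lemma~\ref{diff:back} and the algebra property of $H^{s'}(\mathbb{R}^2)$ to show $T$ maps this ball into itself and is a contraction for $\delta$ small, obtain the unique fixed point $\tilde g = \beta$ with $\Vert\tilde g\Vert_{H^{s'}} \le 2\delta$, and verify $(I+g)\circ(I+\tilde g) = I$ pointwise. Everything else is a direct invocation of Lemma~\ref{diff:back} and bookkeeping of how the constant degenerates as $\delta\to0$.
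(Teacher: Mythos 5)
Your proposal takes essentially the same route as the paper: invert $I+g$ via the implicit function theorem (Lemma~\ref{implicit}), write $f=(f\circ(I+g))\circ(I+\tilde g)$, and apply Lemma~\ref{diff:back} to the outer composition. You also correctly flag a small imprecision that the paper's own proof passes over silently, namely that Lemma~\ref{implicit} is stated in the scalar one-dimensional setting while Lemma~\ref{diff:to} is posed on $\mathbb{R}^2$; as you observe, the fixed-point argument for the inverse runs just as well in the vector-valued algebra $H^{s'}(\mathbb{R}^2;\mathbb{R}^2)$ for $s'>2$, and in the intended application the coordinate change \eqref{def:zv} is triangular so the inversion decouples in any case.
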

\begin{proof}[Proof of Lemma~\ref{diff:to}]
Let $h=(I+g)(y)=y+g(y)$. Then we rewrite this equality as
$
h-y=g(h-y+h).
$
Now thinking of $y$ as a function of $h$, we denote $\beta(h)=h-y$ and deduce
$
\beta(h)=g(h+\beta(h)).
$
By Lemma \ref{implicit}, there exists a $\beta$ such that the above equality holds and 
furthermore
\begin{align*}
\Vert \beta\Vert_{H^s}\lesssim\delta.
\end{align*}
Now we write $f$ as 
\begin{align*}  
f(h)=f\circ(I+g)\circ(I+\beta)(h)
\end{align*}
and use Lemma \ref{diff:back} to conclude the proof.
\end{proof}

From the properties of the heat equation and Lemma \ref{diff:to}, we can deduce the following lemma regarding the coefficients $a$ and $b$, which arise in \eqref{eq:fsys}. 
\begin{lemma} 
From \eqref{g:b}, Lemma \ref{diff:to}, and \eqref{ineq:Uppdecay}, for $\delta$ sufficiently small, there holds for $\sigma>2$, 
  \begin{align}
  \label{l2:g}
  \Vert \partial_v{(a^2-1)}\Vert_{L^2_tH^\sigma_y}
  =\Vert 2b\Vert_{L^2_tH^\sigma_y}
  \lesssim
  \Vert \bar{U}''\Vert_{L^2_tH^\sigma_y}
  \lesssim
  \delta  \nu^{-1/2},
  \end{align}
where the implicit constant does not depend on $\delta$ and $\nu$. 
Similarly, by \eqref{trans:cdnt}, \eqref{EQ400}, and  Lemma \ref{diff:to}, we have that 
  \begin{align}
  \label{gb:bd}
  \Vert a-1\Vert_{H^\sigma}
 +
  \Vert b\Vert_{H^\sigma}
  \le
  2(\Vert \bar{U}'-1\Vert_{H^\sigma}
  +
  \Vert \bar{U}''\Vert_{H^\sigma})
  \le 2\delta
  \end{align}
holds for $\sigma > 2$ and $\delta>0$ {small enough}.
\end{lemma}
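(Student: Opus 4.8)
Both estimates reduce to transferring $y$-variable Sobolev bounds on $\bar U'-1$ and $\bar U''$ through the near-identity change of variables $y \mapsto v = \bar U(t,y)$, together with the algebraic chain-rule identity $\partial_v(a^2-1) = 2a\,\partial_v a = 2b$ from \eqref{g:b}, which already supplies the first equality in \eqref{l2:g}. The structural point is that if we set $g(t,y) = \bar U(t,y) - y$, so that $I+g(t,\cdot)$ is exactly the map $y\mapsto \bar U(t,y) = v$, then by the definitions \eqref{def:ab} and \eqref{trans:cdnt},
\[
(a(t,\cdot)-1)\circ\big(I+g(t,\cdot)\big) = \bar U'(t,\cdot)-1, \qquad b(t,\cdot)\circ\big(I+g(t,\cdot)\big) = \bar U''(t,\cdot),
\]
and that by \eqref{EQ400} together with the contraction of the heat semigroup on $H^s$ one has $\|g(t,\cdot)\|_{H^s}\lesssim\delta$ uniformly in $t\ge 0$ and independently of $\nu$. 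Hence $g(t,\cdot)$ is, for each $t$, an admissible perturbation for Lemma~\ref{diff:to} with $s' = s > 2$, with a smallness that does not degrade in time or as $\nu\to 0$.

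For \eqref{gb:bd} I would apply Lemma~\ref{diff:to} pointwise in $t$ to $f = a(t,\cdot)-1$ and to $f = b(t,\cdot)$, with $s' = s\ge\sigma$, to get
\[
\|a(t,\cdot)-1\|_{H^\sigma} + \|b(t,\cdot)\|_{H^\sigma} \le C_{\sigma,s}(\delta)\big(\|\bar U'(t,\cdot)-1\|_{H^\sigma} + \|\bar U''(t,\cdot)\|_{H^\sigma}\big);
\]
since $C_{\sigma,s}(\delta)\to 1$ as $\delta\to 0$ it is $\le 2$ for $\delta$ small, while $\|\bar U'(t,\cdot)-1\|_{H^\sigma}+\|\bar U''(t,\cdot)\|_{H^\sigma} \le \|U'-1\|_{H^s}+\|U''\|_{H^s} = \delta$ using $\sigma\le s$ and the heat contraction, which is \eqref{gb:bd}. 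For \eqref{l2:g}, after the identity $\partial_v(a^2-1) = 2b$ it remains to bound $\|b\|_{L^2_tH^\sigma_y}$: I would square the pointwise-in-$t$ bound $\|b(t,\cdot)\|_{H^\sigma}\le C_{\sigma,s}(\delta)\|\bar U''(t,\cdot)\|_{H^\sigma}$ and integrate in $t$ — permissible precisely because $C_{\sigma,s}(\delta)$ is a fixed constant, independent of $t$ and $\nu$ — obtaining $\|b\|_{L^2_tH^\sigma_y}\lesssim\|\bar U''\|_{L^2_tH^\sigma_y}\le\|\bar U''\|_{L^2_tH^s_y}\lesssim\delta\nu^{-1/2}$, the final inequality being \eqref{ineq:Uppdecay}.

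The computation is essentially routine; the one point that genuinely needs care is the uniformity of the composition constant in $t$ and in $\nu$, since this is exactly what lets the $L^2$-in-time norm in \eqref{l2:g} pass through Lemma~\ref{diff:to} without loss. That uniformity is not an additional assumption but a consequence of the heat-equation bounds $\sup_{t>0}\|\bar U'(t,\cdot)-1\|_{H^s}\le\|U'-1\|_{H^s}$ and $\sup_{t>0}\|\bar U''(t,\cdot)\|_{H^s}\le\|U''\|_{H^s}$ recorded just above \eqref{ineq:Uppdecay}: they hold with no loss in $t$ or $\nu$, so the diffeomorphism $I+g(t,\cdot)$ stays in a fixed $H^s$-ball and the constant in Lemma~\ref{diff:to} depends only on $\delta$ and $\sigma$. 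The only other bookkeeping is to keep $\sigma\le s$ throughout, so that both Lemma~\ref{diff:to} (which requires the diffeomorphism to be at least as regular as the target) and the embedding $\|\cdot\|_{H^\sigma}\le\|\cdot\|_{H^s}$ apply; since $s\ge 2+N$, there is ample room.
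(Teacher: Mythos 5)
Your proposal is correct and follows exactly the route the paper indicates (the paper itself only cites the ingredients without spelling out the details). You correctly identify that the coordinate map $y\mapsto v=\bar U(t,y)$ is the near-identity perturbation $I+g(t,\cdot)$ with $g=\bar U-y$, that $(a(t,\cdot)-1)\circ(I+g)=\bar U'-1$ and $b(t,\cdot)\circ(I+g)=\bar U''$ by \eqref{def:ab}, that Lemma~\ref{diff:to} applies pointwise in $t$ with a constant $C_{\sigma,s}(\delta)\to1$ that is uniform in $t$ and $\nu$ thanks to the heat-semigroup contraction bounds stated just above \eqref{ineq:Uppdecay}, and that squaring and integrating in $t$ then transfers the $L^2_t H^\sigma_y$ estimate \eqref{ineq:Uppdecay} to $b$; the chain-rule identity $\partial_v(a^2-1)=2a\partial_v a=2b$ from \eqref{g:b} gives the first equality in \eqref{l2:g}. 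The bookkeeping $\sigma\le s$ you flag is indeed the only constraint needed, and $s\ge 2+N$ accommodates every $\sigma$ used later in the paper ($\sigma = N+1$, $N+2$).
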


\subsection{Main result}
The main result of this section, which in particular implies Theorem~\ref{thm:main}, is:

\begin{theorem}
\label{T02}
Let $N>1$ and assume that the shear flow $(U(y),0)$ satisfies \eqref{EQ400}. If the initial vorticity perturbation obeys $\Vert f_{\rm in}\Vert_{H^N}=\varepsilon\ll\nu^{1/2}$, then the {unique} solution $f$ to \eqref{vort}--\eqref{velo} obeys the global in time estimates
\begin{subequations} \label{ineq:estT02}
  \begin{align}
  & \Vert f\Vert_{L^{\infty}H^{N}} + \nu^{1/2}\Vert \nabla_Lf\Vert_{L^2H^{N}} \lesssim\varepsilon, \\
 & \Vert u_0^x \Vert_{L^\infty L^{2}} + \nu^{1/2}\Vert \partial_v u_0^x\Vert_{L^2L^{2}} \lesssim\varepsilon,
  \end{align}
\end{subequations}
and
  \begin{align}
  \Vert \ne{f}\Vert_{L^{2}H^{N}} \lesssim \varepsilon\nu^{-1/6}\label{ineq:estfne}
  \end{align}
where the implicit constants do not depend on $\nu$ or on the initial datum.
\end{theorem}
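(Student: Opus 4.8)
The plan is to mimic the proof of Theorem~\ref{T01}, using the same multiplier $A=M\ang{D}^N$ and the same norm, but now running a bootstrap that \emph{simultaneously} controls the zero mode $u_0^x$. Precisely, I would take $T\le\infty$ maximal such that
\begin{align*}
& \Vert Af\Vert_{L^{\infty}L^{2}} + \nu^{1/2}\Vert\nabla_LAf\Vert_{L^2L^{2}} + \Big\Vert\sqrt{-\dot{M}M}\ang{D}^Nf\Big\Vert_{L^2L^{2}} \\
&\qquad + \Vert u_0^x\Vert_{L^{\infty}L^{2}} + \nu^{1/2}\Vert\partial_v u_0^x\Vert_{L^2L^{2}} \le 8\varepsilon
\end{align*}
holds on $[0,T)$, and prove (via local well-posedness and continuity, exactly as in the Couette case) that the ``$8$'' improves to ``$4$''. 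As before, \eqref{eq:M:cond:5} and the norm equivalence \eqref{ineq:Aequiv} turn the $f$-part of this bound into \eqref{ineq:estfne} just as \eqref{ineq:fneqdec} did, so everything reduces to the two energy estimates below.

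For $f$: applying $A$ to \eqref{vort} produces, in addition to the nonlinear term $\mathcal{T}=\iint A(u\cdot\nabla_tf)Af\,dVdt$, a forcing contribution $\iint A(b\partial_z\phi)Af\,dVdt$ and a dissipation-error term $\nu\iint A((a^2-1)\partial_{vv}^Lf)Af\,dVdt$ coming from $\tilde{\Delta}_t-\Delta_L$. The term $\mathcal{T}$ is split as $\mathcal{T}_0+\mathcal{T}_{\neq}$ exactly as before using $u=\nabla_t^\perp\phi$; since the Biot--Savart law is now $\Delta_t\phi=f$ rather than a clean Fourier multiplier, I would first prove an elliptic estimate showing (via \eqref{deri:3} and \eqref{gb:bd}) that $\nabla_L^\perp\Delta_t^{-1}$ is an $O(\delta)$ perturbation of $\nabla_L^\perp\Delta_L^{-1}$ in the relevant norms, so that $\ne{u}$ is still controlled by $\big\Vert\sqrt{-\dot{M}M}\ang{D}^N\ne{f}\big\Vert_{L^2L^2}$ up to harmless corrections. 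The critical term remains $\mathcal{T}_{\neq}$, which by \eqref{eq:M:cond:3} and the bootstrap gives $\lesssim\varepsilon^3\nu^{-1/2}$, forcing $\varepsilon\ll\nu^{1/2}$ precisely as in \eqref{ineq:CouetteTneq}; the term $\mathcal{T}_0$ is handled by commutator bounds as in Theorem~\ref{T01}. The genuinely new terms are controlled by the smallness of the coefficients: using \eqref{l2:g}, \eqref{gb:bd} and \eqref{eq:M:cond:4}--\eqref{eq:M:cond:6}, the forcing $b\partial_z\phi$ and the dissipation error are bounded by $\delta$ times bootstrap quantities, the $\nu^{-1/2}$ loss in $\Vert b\Vert_{L^2_tH^\sigma_y}\lesssim\delta\nu^{-1/2}$ from \eqref{ineq:Uppdecay}/\eqref{l2:g} being absorbed by a compensating $\nu^{1/2}$ weight (carried by $\nu^{1/2}\nabla_L$, or for $\phi$ by inviscid damping via \eqref{eq:M:cond:3}).

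For $u_0^x$: I would derive from \eqref{eq:fsys} the evolution equation for the zero mode, a forced heat-type equation of the schematic form $\partial_t u_0^x-\nu\partial_v(a^2\partial_v u_0^x) = -\partial_v\overline{\ne{u}^x\,\ne{u}^y}+(\text{terms with }b)$, and run an $L^2$ energy estimate. The point is to quantify the transfer of kinetic energy from the non-zero modes: the Reynolds stress $\overline{\ne{u}^x\,\ne{u}^y}$ is quadratic in $\ne{u}$, and each factor enjoys the $\sqrt{-\dot{M}M}$/inviscid-damping decay, so after integrating by parts in $v$ and applying Cauchy--Schwarz in time the forcing is bounded by $\varepsilon^2\nu^{-1/2}$, again consistent with the bootstrap under $\varepsilon\ll\nu^{1/2}$; the $b$-dependent terms are handled by \eqref{l2:g} and \eqref{gb:bd}.

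The main obstacle I anticipate is twofold. First, keeping the critical estimate $|\mathcal{T}_{\neq}|\lesssim\varepsilon^3\nu^{-1/2}$ intact once $u=\nabla_t^\perp\phi$ is no longer a multiplier — this requires the elliptic perturbation lemma for $\Delta_t^{-1}$ to interact cleanly with $M$. Second, and more seriously, every term carrying the coefficient $b$ (the forcing $b\partial_z\phi$ in \eqref{vort}, the Reynolds-stress and drift terms in the $u_0^x$ equation, and the variable-coefficient part of the dissipation) inherits the factor $\nu^{-1/2}$ from \eqref{ineq:Uppdecay}/\eqref{l2:g}; showing that each such occurrence is always paired with a compensating $\nu^{1/2}$ (from the $\nu^{1/2}\nabla_L$ weight, from $\sqrt{-\dot{M}M}$, or from $\varepsilon\ll\nu^{1/2}$) so that the full coupled system closes within a \emph{single} bootstrap is the delicate bookkeeping at the heart of this section. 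A secondary nuisance is that multiplication by $a^2-1$ and $b$ acts as convolution in the $v$-frequency, so the single-commutator trick used for $\mathcal{T}_0$ in the Couette case must be replaced by commutators between $A$ and multiplication operators; the bound \eqref{eq:M:cond:6}, together with the regularity hypothesis $s\ge 2+N$ on $U$, is exactly what is needed to pay for the resulting $\ang{\eta-\xi}$ losses.
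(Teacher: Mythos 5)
Your overall strategy matches the paper's: the same multiplier $A=M\ang{D}^N$, a bootstrap on $\Vert Af\Vert$, $\nu^{1/2}\Vert\nabla_LAf\Vert$, $\big\Vert\sqrt{-\dot MM}\ang D^Nf\big\Vert$, and a kinetic-energy bound on the zero mode, with the transport term $\mathcal T_{\neq}$ remaining the critical contribution forcing $\varepsilon\ll\nu^{1/2}$, and elliptic perturbation lemmas showing $\Delta_L\Delta_t^{-1}$ is an $O(\delta)$ perturbation of the identity that commutes acceptably with $\sqrt{-\dot MM}$. Two places, however, do not work as you describe.

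First, your plan for the source term $\iint A(b\partial_z\phi)Af$ relies on paying for $\Vert b\Vert_{L^2_tH^\sigma}\lesssim\delta\nu^{-1/2}$ with ``a compensating $\nu^{1/2}$.'' But unlike the dissipation-error terms, this term carries no explicit factor of $\nu$; if you put the time integration on $b$ via Cauchy--Schwarz you obtain something like $\delta\nu^{-1/2}\,\Vert\partial_z\phi\Vert_{L^2H^N}\,\Vert Af\Vert_{L^\infty L^2}\lesssim\delta\varepsilon^2\nu^{-1/2}$, which is \emph{not} absorbable and does not close. The correct route, and what the paper does, is to use the $L^\infty_t$ bound $\Vert b\Vert_{H^{N+2}}\lesssim\delta$ from \eqref{gb:bd}, write the $\partial_z\Delta_L^{-1}$ piece as a Fourier multiplier dominated by $-\dot M/M$ via \eqref{eq:M:cond:3}, split $\sqrt{-\dot M/M}$ symmetrically between the two copies of $f$, and pass it through the $b$-convolution using \eqref{eq:M:cond:6}. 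This gives $|S|\lesssim\delta\,\big\Vert\sqrt{-\dot MM}\ang D^Nf_{\neq}\big\Vert_{L^2L^2}^2$, absorbed on the left for small $\delta$, with no appearance of $\nu$ at all. The $L^2_t$ bound $\Vert b\Vert_{L^2_tH^\sigma}\lesssim\delta\nu^{-1/2}$ is used only in the zero-mode dissipation error $DE_0$, where the prefactor $\nu$ exactly cancels the two factors of $\nu^{-1/2}$ (one from $b$, one from $\nu^{1/2}\Vert\partial_vf_0\Vert_{L^2H^N}\lesssim\varepsilon$).

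Second, for the $u_0$ energy estimate you propose to work directly in the $(z,v)$ variables, where $\partial_t u_0-\nu\partial_v(a^2\partial_vu_0)$ generates various $a,b$-dependent drift and commutator terms. The paper instead undoes the change of variables and estimates $\bar v_0^x$ in the original $(x,y)$ coordinates, where $\bar v_0^y\equiv0$ makes the equation $\partial_t\bar v_0^x+(\bar v\cdot\nabla\bar v^x)_0=\nu\Delta\bar v_0^x$ entirely clean; the only nonlinear term is the Reynolds stress $\big(\partial_y(\bar v_{\neq}^y\bar v_{\neq}^x)\big)_0$, and the result is transported back using Lemmas \ref{diff:back}--\ref{diff:to} (cf.\ \eqref{eq:v0:u0}). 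Your route is not doomed, but it introduces exactly the unpleasant coefficient bookkeeping that switching coordinates avoids. Relatedly, your claimed bound $\varepsilon^2\nu^{-1/2}$ for the Reynolds-stress contribution is not what the energy estimate should produce: the term $\iint\big(\partial_y(\bar v_{\neq}^y\bar v_{\neq}^x)\big)_0\bar v_0^x$ is \emph{cubic}, and the paper's estimate is $\abs{\mathcal{\bar T}}\lesssim\Vert f_{\neq}\Vert_{L^2H^N}^2\Vert\bar v_0^x\Vert_{L^\infty L^2}\lesssim\varepsilon^3\nu^{-1/3}$, which closes already under the weaker hypothesis $\varepsilon\ll\nu^{1/3}$.
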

Indeed, the estimates \eqref{ineq:estT02} and \eqref{ineq:estfne} together with Lemmas \ref{diff:back} and \ref{diff:to}, imply Theorem \ref{thm:main}.

\subsection{Proof of Theorem \ref{T02}}  
The proof of Theorem \ref{T02} uses the main ideas of Theorem \ref{T01}, however, due to the more complicated change of coordinates \eqref{def:zv}, we need to consider new terms, such as the linear term $b\partial_z\phi$ appearing in \eqref{vort}. Moreover, due to \eqref{def:zv}, the Biot-Savart law \eqref{pote}--\eqref{velo} is more complicated, and in particular, \eqref{velo} is no longer expressible as a Fourier multiplier (this was possible  in the proof of Theorem \ref{T01}), which in turn makes the use of our norm $A$ a little more difficult. 

\begin{proof}[Proof of Theorem~\ref{T02}]
For our norm, we use the same multipliers $M$ and $A$ as in the the proof of Theorem \ref{T01} (the $M$ which obeys the conditions \eqref{eq:Mconds}). 
Let $T$ be the maximal time interval $[0,T]$ such that the following estimates hold: 
\begin{subequations} \label{bst}  
\begin{align}
  \label{bst:a}
  &\Vert Af\Vert_{L^{\infty}L^{2}} 
 +
  \nu^{1/2}\Vert \nabla_LAf\Vert_{L^2L^{2}} 
 +
  \left\Vert \sqrt{-\dot{M}M}\ang{D}^Nf\right\Vert_{L^2L^{2}}
  \le8\varepsilon
  \\&
  \label{bst:b}
  \Vert u_0^z\Vert_{L^\infty L^{2}} + \nu^{1/2}\Vert \partial_v u_0^z\Vert_{L^2L^{2}}
  \le8\varepsilon. 
  \end{align}
\end{subequations}
By local well-posedness, the quantities on the left-hand side of  \eqref{bst} take values continuously in time, $T > 0$, and \eqref{bst} holds on a smaller time interval with the ``8'' replaced by a ``2''.  
The following proposition implies Theorem \ref{T02}. 
\begin{proposition} \label{prop:boot}
For $\delta$ and $\varepsilon$ chosen sufficiently small, the estimates in \eqref{bst} hold on $[0,T]$ with ``8'' replaced with ``4''. It follows by continuity that $T = +\infty$. 
\end{proposition}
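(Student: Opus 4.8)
The plan is to run the continuity argument described above, improving both inequalities in \eqref{bst} from the constant $8$ to the constant $4$: the first via a weighted energy estimate for $\norm{Af}_{L^2}$, mimicking the Couette proof of Theorem~\ref{T01}, and the second via an energy estimate for the $x$-average of the velocity. Two preliminary ingredients are needed. First, a perturbative analysis of the Biot--Savart law \eqref{pote}: rewriting \eqref{deri:3} as $\Delta_L\phi_{\neq} = f_{\neq} - ((a^2-1)\partial_{vv}^L + b\partial_v^L)\phi_{\neq}$ and using the smallness \eqref{gb:bd} of $a^2-1$ and $b$, one inverts $\Delta_L$ and deduces that $\phi_{\neq}$, $\partial_z\phi_{\neq}$ and $u_{\neq}=\nabla_t^\perp\phi_{\neq}$ all inherit the Couette-type bounds; in particular, using \eqref{eq:M:cond:3}, one obtains pointwise in time $\norm{\nabla_L\phi_{\neq}}_{H^N}+\norm{\partial_z\phi_{\neq}}_{H^N}\lesssim\big\|\sqrt{-\dot M M}\langle D\rangle^N f_{\neq}\big\|_{L^2}$ up to errors that are $O(\delta)$ relative to the right-hand side (coming from the corrector $\Delta_L^{-1}((a^2-1)\partial_{vv}^L+b\partial_v^L)\phi_{\neq}$). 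Second, commutator bounds between $A$ and multiplication by functions of $v$ (such as $a$ and $b$), handled exactly by the $\mathrm{com}_1+\mathrm{com}_2$ decomposition of the Couette proof using the mean value theorem, \eqref{eq:M:cond:4} and \eqref{eq:M:cond:6}, with the $v$-frequency increments absorbed by the decay of $\widehat a,\widehat b$ guaranteed by $s\ge 2+N$.

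For the energy estimate on $\norm{Af}_{L^2}$, apply $A$ to \eqref{vort}, pair with $Af$ and integrate; handling $\dot A$ as in Theorem~\ref{T01} gives
\[
\tfrac12\norm{Af(T)}_{L^2}^2 + \nu\norm{\nabla_L Af}_{L^2L^2}^2 + \Big\|\sqrt{-\dot M M}\langle D\rangle^N f\Big\|_{L^2L^2}^2 = \tfrac12\norm{Af(0)}_{L^2}^2 - \mathcal{T}_0 - \mathcal{T}_{\neq} + \mathcal{L} + \mathcal{V},
\]
where $\mathcal{T}_0+\mathcal{T}_{\neq}=\iint A(u\cdot\nabla_t f)Af$ splits into the contribution of $u_0^z\partial_z f$ and of $\ne{u}\cdot\nabla_t f$, $\mathcal{L}=\iint A(b\partial_z\phi)Af$ is the new reaction term, and $\mathcal{V}=\nu\iint A((a^2-1)\partial_{vv}^L f)Af$ is the variable-coefficient part of the dissipation. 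Using $\ne{u}\cdot\nabla_t f = a\,(\nabla_L^\perp\phi_{\neq}\cdot\nabla_L f)$, the elliptic bound above, \eqref{eq:M:cond:3} and the $H^N$ algebra property ($N>1$), one gets $|\mathcal{T}_{\neq}|\lesssim\big\|\sqrt{-\dot M M}\langle D\rangle^N\ne{f}\big\|_{L^2L^2}\norm{\nabla_L f}_{L^2H^N}\norm{Af}_{L^\infty L^2}\lesssim\varepsilon^3\nu^{-1/2}$; this is the only genuinely critical term and is where $\varepsilon\ll\nu^{1/2}$ enters. The term $\mathcal{T}_0$ is treated by the same commutator argument as \eqref{EQ303}--\eqref{ineq:T0postCommCouette} (with $u_0^z = -(a\partial_v)^{-1}f_0$ in place of $-\partial_v^{-1}f_0$); bounding $\norm{f_0}_{L^\infty H^N}\lesssim\varepsilon$ from \eqref{bst:a}, and $\norm{u_0^z}_{L^\infty H^N}\lesssim\norm{u_0^z}_{L^\infty L^2}+\norm{f_0}_{L^\infty H^{N-1}}\lesssim\varepsilon$ (here the second bootstrap \eqref{bst:b} supplies the low frequencies not seen by $\norm{f}_{H^N}$ on $\RR$), yields $|\mathcal{T}_0|\lesssim(\norm{f_0}_{L^\infty H^N}+\norm{u_0^z}_{L^\infty H^N})\norm{\ne{f}}_{L^2H^N}^2\lesssim\varepsilon^3\nu^{-1/3}$. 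For $\mathcal{V}$, one integrates $\partial_{vv}^L=(\partial_v-t\partial_z)^2$ by parts: the principal term is $-\nu\int a^2|\partial_v^L Af|^2\le 0$ (note $a^2\ge(1-\delta)^2>0$ by \eqref{gb:bd}) and absorbs on the left, while every commutator produced carries a factor $\partial_v(a^2-1)=2b$, which by \eqref{l2:g} satisfies $\norm{b}_{L^2_tH^\sigma}\lesssim\delta\nu^{-1/2}$; pairing this $L^2_t$-smallness against one factor $\nu^{1/2}\norm{\nabla_L Af}_{L^2L^2}$ and using the remaining $\nu^{1/2}$ from the prefactor gives $|\mathcal{V}|\lesssim\delta\varepsilon^2$, harmless since $\delta\ll1$.

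The hard part will be the reaction term $\mathcal{L}=\iint A(b\partial_z\phi_{\neq})Af_{\neq}$ (note $b\partial_z\phi$ has no $x$-mean, and $a-1$, being a function of $v$, does not affect the nonzero modes). The structural observation to exploit is the identity $b\partial_z\phi = u\cdot\nabla_t(a-1)$ --- the reaction term is exactly the transport of the known, $O(\delta)$-small, $z$-independent (hence ``unmixing-free'') background correction $a-1$ by the perturbation velocity --- so that $g:=f-(a-1)$ solves the clean advection--diffusion equation $\partial_t g + u\cdot\nabla_t g = \nu\tilde\Delta_t g$ with no reaction term. Converting this cancellation into a quantitative bound is the delicate point: after integrating $\partial_z$ by parts and using the inviscid-damping estimate $\norm{\partial_z\phi_{\neq}}_{H^N}\lesssim\big\|\sqrt{-\dot M M}\langle D\rangle^N f_{\neq}\big\|_{L^2}$, one is left to control $b$ in $L^2_t$, which costs $\delta\nu^{-1/2}$, and this negative power of $\nu$ must be recovered by combining the inviscid-damping gain on $\partial_z\phi_{\neq}$ with the $\nu^{1/2}$ parabolic gain and the enhanced-dissipation improvement \eqref{ineq:fneqdec}, arranging the estimate so that the result is of the form $\delta\varepsilon^2$ after absorbing a remainder into the good terms $\nu\norm{\nabla_L Af}_{L^2L^2}^2+\big\|\sqrt{-\dot M M}\langle D\rangle^N f\big\|_{L^2L^2}^2$ on the left. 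I expect this to be, by a wide margin, the most technical step; it is the analogue of the ``reaction'' estimates that dominate the analysis of 2D Euler/Navier--Stokes near monotone shears, and it is precisely what the multiplier conditions \eqref{eq:M:cond:3}--\eqref{eq:M:cond:6} are designed to make possible.

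Finally, for \eqref{bst:b}, since $\vv^x=-\partial_y\psi$ transforms to $-a\partial_v^L\phi=u^z$, so that $\vv_0^x=u_0^z$, taking the $x$-average of the momentum equation \eqref{EQ01} and using $\vv_0^y=0$ shows $u_0^z$ solves a variable-coefficient forced heat equation whose forcing is bilinear in $\ne{u}$ and in divergence form with respect to $v$. A direct energy estimate, integrating the divergence by parts, gives $\norm{u_0^z}_{L^\infty L^2}^2+\nu\norm{\partial_v u_0^z}_{L^2L^2}^2\lesssim\varepsilon^2 + \norm{(\ne{u}\otimes\ne{u})_0}_{L^2L^2}\norm{\partial_v u_0^z}_{L^2L^2}+\delta(\cdots)$; bounding $\norm{(\ne{u}\otimes\ne{u})_0}_{L^2L^2}\lesssim\norm{\ne{u}}_{L^\infty H^N}\norm{\partial_z\phi_{\neq}}_{L^2L^2}\lesssim\varepsilon^2$ (via $H^N\hookrightarrow L^\infty$ for $N>1$ and the elliptic bound) and $\norm{\partial_v u_0^z}_{L^2L^2}\lesssim\varepsilon\nu^{-1/2}$ from \eqref{bst:b}, the right-hand side is $\lesssim\varepsilon^2+\varepsilon^3\nu^{-1/2}$, again closing under $\varepsilon\ll\nu^{1/2}$. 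Collecting all bounds and choosing $\varepsilon\ll\nu^{1/2}$ and $\delta\ll1$ improves both constants in \eqref{bst} to $4$, and the proposition follows by continuity.
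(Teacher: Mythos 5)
Your outline follows the paper closely for the transport terms $\mathcal{T}_0$, $\mathcal{T}_{\neq}$, the variable-coefficient dissipation $\mathcal V$ (the paper's $DE$), and the $x$-averaged velocity estimate; the bounds $\lesssim\varepsilon^3\nu^{-1/2}$, $\lesssim\varepsilon^3\nu^{-1/3}$, $\lesssim\delta\varepsilon^2$, and $\lesssim\varepsilon^3\nu^{-1/2}$ that you obtain for these are all either identical to or harmlessly weaker than the paper's, and the formula $u_0^z=-(a\partial_v)^{-1}f_0$ is correct since $\Delta_t=(a\partial_v)^2$ on zero modes.

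The genuine gap is the source term $\mathcal L=S=\iint A(b\,\partial_z\phi)Af\,dV\,dt$, which you explicitly leave open, and where the plan you sketch (estimate $\|b\|_{L^2_t}\lesssim\delta\nu^{-1/2}$ and then claw back the $\nu^{-1/2}$ from enhanced dissipation and the parabolic factor) is the wrong road and not how the paper closes it. The paper's treatment involves no negative power of $\nu$ at all: it writes $\partial_z\Delta_t^{-1} = (\partial_z\Delta_L^{-1})(\Delta_L\Delta_t^{-1})$, notes that the symbol $\abs{k}/(k^2+\abs{\eta-kt}^2)$ of $\partial_z\Delta_L^{-1}$ is dominated by $-\dot M(k,\eta)/M(k,\eta)$ via \eqref{eq:M:cond:3}, splits this into $\sqrt{-\dot M/M}(k,\eta)\cdot\sqrt{-\dot M/M}(k,\eta)$, and uses \eqref{eq:M:cond:6} to transfer one of the two factors from the input frequency $\eta$ to the output frequency $\xi$ at the price of $\brak{\xi-\eta}$, which is absorbed by $\hat b$. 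After Young's inequality and Lemma~\ref{l03} this yields $\abs{S}\lesssim\norm{b}_{H^{N+2}}\big\|\sqrt{-\dot M M}\ang{D}^{N}f_{\neq}\big\|_{L^2L^2}^2\lesssim\delta\big\|\sqrt{-\dot M M}\ang{D}^{N}f_{\neq}\big\|_{L^2L^2}^2$, which is absorbed directly on the left of the energy identity, using only the pointwise-in-time bound \eqref{gb:bd} for $b$. The key move you are missing is that the inviscid-damping gain in $\partial_z\Delta_L^{-1}$ is exactly $(-\dot M/M)^1$, i.e.\ two copies of $\sqrt{-\dot M/M}$, and property \eqref{eq:M:cond:6} is precisely what lets you put one on each $f_{\neq}$. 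Without this, you would indeed be stuck with a $\norm{Af}_{L^2L^2}$ factor that does not close. Your ``structural'' identity $b\,\partial_z\phi = u\cdot\nabla_t(a-1)$ is true (since $\nabla_t(a-1)=(0,a\partial_v a)=(0,b)$ and $u^v=\partial_z\phi$), and conceptually explains why the term is benign, but it does not by itself produce the quantitative bound: the Biot--Savart law relates $u$ to $f$, not to $g=f-(a-1)$, so the resulting transport equation for $g$ is not self-contained, and no estimate is extracted. This term is in fact one of the shortest in the proof, not ``by a wide margin the most technical step''.
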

We now prove Proposition \ref{prop:boot}.

\subsubsection{Energy estimate on $f$, \eqref{bst:a}}
In this subsection we improve \eqref{bst:a}. 
Applying the operator $A$ to \eqref{vort} we arrive at the energy estimate for the vorticity $f$:
  \begin{align}
  \label{EQ401}
  &\frac{1}{2}\Vert Af(T)\Vert_{L^{2}}^2
  +
  \nu\Vert \nabla_LAf(T)\Vert_{L^{2}L^{2}}^2
  +\left\Vert \sqrt{-\dot{M}M}\ang{D}^Nf(T)\right\Vert_{L^{2}L^{2}}^2
  \nonumber\\&
  \quad=
  \frac{1}{2}\Vert Af(0)\Vert_{L^{2}}^2 
  -\iint A(u\cdot\nabla_tf)Af\,dVdt
  +\iint A(b\partial_z\phi)Af\,dVdt
  \nonumber\\&
  \qquad
  +
  \nu\iint A((a^2-1)\partial^L_{vv}f)Af\,dVdt
  \nonumber\\&\quad
  =\frac{1}{2}\Vert Af(0)\Vert_{L^{2}}^2-\mathcal{T}+{\rm S}+{\rm DE}.
  \end{align}
We next bound  the {\em transport}, the {\em source}, and the {\em dissipation error} terms on the right side of \eqref{EQ401}.
  
We first consider the transport term $\mathcal{T}$. 
We begin by decomposing the velocity field into zero and non-zero modes (recalling \eqref{pote}--\eqref{velo}, and the definition of $\nabla_t$), we have 
  \begin{align}
  \mathcal{T} 
  &=  \iint A(u_0^z \partial_z f)Af\,dVdt+\iint A(\grad_t^{\perp}\Delta_t^{-1}\ne{f}\cdot\nabla_tf)Af\,dVdt \nonumber  \\
  &= \mathcal{T}_0+\ne{\mathcal{T}} \notag.
  \end{align}
Using \eqref{gb:bd} and the $H^N$ product rule ($N > 1$), the term $\ne{\mathcal{T}}$ is bounded as
  \begin{align}
  \label{EQ402}
  \abs{\ne{\mathcal{T}}}
  &\lesssim
  \Vert \nabla_t^{\perp}\Delta_t^{-1}\ne{f}\Vert_{L^{2}H^{N}}\Vert \nabla_tf\Vert_{L^{2}H^{N}}\Vert Af\Vert_{L^{\infty}L^{2}}
  \nonumber\\&
  \lesssim  \Vert\nabla_L^{\perp}\Delta_t^{-1}\ne{f}\Vert_{L^{2}H^{N}} \Vert \nabla_Lf\Vert_{L^{2}H^{N}}\Vert Af\Vert_{L^{\infty}L^{2}}.
  \end{align}
By using \eqref{eq:M:cond:3}, the fact that $|k| \geq 1$, and the Lemma \eqref{l03}, which allows us to  commute $\Delta_t^{-1}$ and $\sqrt{-\dot{M}M}$, there holds
\begin{align*}  
\Vert\nabla_L^{\perp}\Delta_t^{-1}\ne{f}\Vert_{L^{2}H^{N}}
  \lesssim
  \left\Vert\sqrt{-\dot{M}M}\Delta_L\Delta_t^{-1}\ne{f}\right\Vert_{L^{2}H^{N}}
  \lesssim
   \left\Vert\sqrt{-\dot{M}M}\ne{f}\right\Vert_{L^{2}H^{N}}.
\end{align*}
From \eqref{EQ402} and \eqref{bst}, we then obtain
\begin{align*}
  \abs{\ne{\mathcal{T}}}
  \lesssim
  \left\Vert\sqrt{-\dot{M}M}\ne{f}\right\Vert_{L^{2}H^{N}}
  \Vert \nabla_Lf\Vert_{L^{2}H^{N}}\Vert Af\Vert_{L^{\infty}L^{2}}
  \lesssim \varepsilon^3\nu^{-1/2}.
\end{align*}
This is consistent with Proposition \ref{prop:boot} provided we use the hypothesis that $\varepsilon \ll \nu^{1/2}$. 

 For the zero mode term $\mathcal{T}_0$, we begin as in ~Section~\ref{sec:Couette}. 
Similarly to \eqref{eq:T0Couette}, upon integrating by parts  we need to consider  the commutator  
\begin{align*}
  \mathcal{T}_0
  = -\iint \left(A(u_0^z \partial_z\ne{f})-u_0^z\partial_zA\ne{f}\right)A\ne{f}\,dVdt.
\end{align*}
By Plancherel's theorem,  
\begin{align*}  
-\mathcal{T}_0
  = \frac{1}{4 \pi^2}
  \sum_{k\neq0}\int_t\int_\xi\int_\eta \left(A(k,\xi)-A(k,\xi-\eta)\right)\widehat{u_0^z}(\eta)i k\hat{f}(k,\xi-\eta) A(k,\xi)\bar{\hat{f}}(k,\xi)\,d\eta d\xi dt.   
\end{align*}
The commutator is estimated as in \eqref{ineq:T0postCommCouette} above (in particular, we apply \eqref{eq:com:1} and \eqref{eq:com:2}); we also apply $u_0^z = a\partial_v\Delta_t^{-1}f_0$, 
  \begin{align*}
  \abs{\mathcal{T}_0}
  &\lesssim
  \Vert \partial_v\left(a \partial_v \Delta_t^{-1} f_0\right)\Vert_{L^{\infty}H^{N}} \Vert \ne{f}\Vert_{L^{2}H^{N}}\Vert A\ne{f}\Vert_{L^{2}L^{2}}
  \nonumber\\&
   \lesssim \left(\Vert a-1\Vert_{L^\infty H^{N+1}} \Vert \partial_{v}\Delta_t^{-1}f_0\Vert_{L^{\infty}H^{N}} +\Vert a\partial_{vv}\Delta_t^{-1}f_0\Vert_{L^{\infty}H^{N}}\right)  \Vert \ne{f}\Vert_{L^{2}H^{N}}\Vert A\ne{f}\Vert_{L^{2}L^{2}}.
  \end{align*}
Note that by considering separately frequencies less than and greater than one, there holds
\begin{align*}
\norm{\partial_v\Delta_t^{-1}f_0}_{L^\infty H^N} \leq \norm{\partial_v \Delta_t^{-1}f_0}_{L^\infty L^2} + \norm{\partial_{vv}\Delta_t^{-1}f_0}_{L^\infty H^N}. 
\end{align*}
Therefore, by \eqref{gb:bd}, \eqref{ineq:fneqdec}, Lemma \ref{l04}, and Lemma \ref{diff:to} (see also \eqref{eq:v0:u0} below), we further obtain 
  \begin{align*}
    \abs{\mathcal{T}_0}  & \lesssim \left(\Vert \partial_{v}\Delta_t^{-1}f_0\Vert_{L^{\infty}L^{2}}  + \Vert \partial_{vv}\Delta_t^{-1}f_0\Vert_{L^{\infty}H^{N}}\right)
  \Vert \ne{f}\Vert_{L^{2}H^{N}}\Vert A\ne{f}\Vert_{L^{2}L^{2}}
  \nonumber\\&
   \lesssim
  \left(\Vert \partial_{v}\Delta_t^{-1}f_0\Vert_{L^{\infty}L^{2}}
  +\Vert f_0\Vert_{L^{\infty}H^{N}}\right)
  \Vert \ne{f}\Vert_{L^{2}H^{N}}\Vert A\ne{f}\Vert_{L^{2}L^{2}}. 
\end{align*} 
Using \eqref{gb:bd} and Sobolev embedding, we have for $\delta$ sufficiently small (recall $u_0^z = -a\partial_v\Delta_t^{-1}f_0$), 
\begin{align*} 
  \abs{\mathcal{T}_0} & \lesssim  \left(\Vert u_0^z \Vert_{L^{\infty}L^{2}} +\Vert f_0\Vert_{L^{\infty}H^{N}}\right) \Vert \ne{f}\Vert_{L^{2}H^{N}}\Vert A\ne{f}\Vert_{L^{2}L^{2}}
  \nonumber\\&
  \lesssim
  \varepsilon^3\nu^{-1/3}.
  \end{align*}
Here in the last step we used the bootstrap hypotheses \eqref{bst}. 
This is now consistent with Proposition \ref{prop:boot} provided $\varepsilon \ll \nu^{1/3}$ (which of course is weaker than the hypotheses $\varepsilon \ll \nu^{1/2}$). 

Next we estimate the source term $S$ in \eqref{EQ401}. Recall that $b = b(t,v)$ and thus
  \begin{align*}
  S = \iint A(b\partial_z\Delta_t^{-1}\ne{f})A\ne{f}\,dVdt.
  \end{align*}
Via Plancherel's theorem, \eqref{eq:M:cond:3}, and \eqref{eq:M:cond:6}, there holds 
  \begin{align}
  |S|&=
\left|  \iint A(b\partial_{z}\Delta_L^{-1}\Delta_L\Delta_t^{-1}\ne{f})A\ne{f}\,dVdt \right| \notag \\
  & = \frac{1}{4\pi^2} \left| \sum_{k\neq0}\iiint A(k,\xi)\hat{b}(\xi-\eta) \frac{ik}{k^2 + \abs{\eta-kt}^2} \widehat{(\Delta_L\Delta_t^{-1}\ne{f})}(k,\eta) A(k,\xi) \bar{\hat{f}}(k,\xi) \,d\eta d\xi dt \right| \notag  \\
  &\lesssim \sum_{k\neq0}\iiint \abs{A(k,\xi)\hat{b}(\xi-\eta) \frac{-\dot{M}(k,\eta)}{M(k,\eta)} \widehat{(\Delta_L \Delta_t^{-1}\ne{f})}(k,\eta) A(k,\xi) \hat{f}(k,\xi)} \,d\eta d\xi dt \notag  \\ 
& \lesssim \sum_{k\neq0}\iiint \abs{A(k,\xi)\brak{\xi-\eta}\hat{b}(\xi-\eta) \sqrt{\frac{-\dot{M}(k,\eta)}{M(k,\eta)}} \widehat{(\Delta_L\Delta_t^{-1}\ne{f})}(k,\eta)} \notag \\ 
& \quad\quad \times \abs{\sqrt{\frac{-\dot{M}(k,\xi)}{M(k,\xi)}} A(k,\xi) \hat{f}(k,\xi)} \,d\eta d\xi dt \notag  \\ 
& \lesssim \sum_{k\neq0}\iiint \abs{\brak{\xi-\eta}^{N+1}\hat{b}(\xi-\eta) \sqrt{\frac{-\dot{M}(k,\eta)}{M(k,\eta)}}A(k,\eta)\widehat{(\Delta_L\Delta_t^{-1}\ne{f})}(k,\eta)} \notag \\ 
& \quad\quad \times \abs{\sqrt{\frac{-\dot{M}(k,\xi)}{M(k,\xi)}} A(k,\xi) \hat{f}(k,\xi)} \,d\eta d\xi dt. 
\label{eq:S:bound:1}
  \end{align}
 Young's convolution inequality followed by Lemma \ref{l03} and \eqref{gb:bd} yields 
  \begin{align*}
  |S|&
  \lesssim
  \Vert b\Vert_{H^{N+2}}\left\Vert \sqrt{-\dot{M}{M}}\ang{D}^N\Delta_L\Delta_t^{-1}\ne{f}\right\Vert_{L^{2}L^{2}} \left\Vert \sqrt{-\dot{M}{M}}\ang{D}^N\ne{f}\right\Vert_{L^{2}L^{2}}
  \nonumber\\&
  \lesssim \delta\left\Vert \sqrt{-\dot{M}M}\ang{D}^N\ne{f}\right\Vert_{L^{2}L^{2}}^2.
  \end{align*}
Hence, for $\delta$ sufficiently small, this term is absorbed on the left hand side of \eqref{EQ401}.

Finally, we estimate the diffusion error term $DE$ in \eqref{EQ401}. 
First divide into zero and non-zero modes,  
  \begin{align*}
  DE &=\nu\iint A((a^2-1)\partial_{vv}f_0)Af_0\,dVdt + \nu\iint A((a^2-1)\partial^L_{vv}\ne{f})A\ne{f}\,dVdt,\nonumber\\
  & = DE_0+\ne{DE}.
  \end{align*}
Integration by parts, followed by \eqref{g:b}, \eqref{gb:bd}, and a bound similar to \eqref{eq:S:bound:1}, implies
  \begin{align*}
  |\ne{DE}|=&
   \left| 2 \nu  \iint A(b\, \partial^L_{v}\ne{f})A\ne{f}\,dVdt 
   +\nu\iint A((a^2-1)\partial^L_{v}\ne{f})\partial_{v}^LA\ne{f}\,dVdt\right|
   \nonumber\\
   \lesssim&
   \delta\nu\Vert \nabla_L\ne{f}\Vert_{L^{2}H^{N}}\Vert A\ne{f}\Vert_{L^{2}L^{2}}
   +
   \delta\nu\Vert \nabla_L\ne{f}\Vert_{L^{2}H^{N}}\Vert \nabla_LA\ne{f}\Vert_{L^{2}L^{2}}
   \nonumber\\
   \lesssim&
   \delta\nu\Vert \nabla_L\ne{f}\Vert_{L^{2}H^{N}}^2+\delta\nu\Vert A\ne{f}\Vert_{L^{2}L^{2}}^2 \notag\\
   \lesssim&
   \delta\nu\varepsilon^2\nu^{-1}
   \lesssim \delta\varepsilon^2. 
  \end{align*}
This is consistent with Proposition \ref{prop:boot} provided $\delta$ is chosen sufficiently small (independently of $\nu$ and $\eps$). 
For the zero mode term, we similarly  integrate by parts and use \eqref{l2:g} to obtain
  \begin{align*}
  |DE_0|   &\lesssim \nu \Vert b \Vert_{L^{2}H^{N}}\Vert \partial_v{f_0}\Vert_{L^{2}H^{N}}\Vert f_0\Vert_{L^{\infty}H^{N}} +  \nu \Vert a^2-1 \Vert_{L^{\infty}H^{N}} \Vert \partial_v{f_0}\Vert_{L^{2}H^{N}}^2
  \nonumber\\&
  \lesssim 
  \delta\varepsilon^2,
  \end{align*}
which is again consistent with Proposition \ref{prop:boot} by choosing $\delta$ sufficiently small. 
This concludes improvement of \eqref{bst:a} for $\varepsilon$ and $\delta$ sufficiently small. 

\subsubsection{Velocity estimate} 
Here we improve \eqref{bst:b}. 
By Lemmas \ref{diff:back} and \ref{diff:to} we have
  \begin{align}
  \label{eq:v0:u0}
  \frac{1}{2}\Vert u_0\Vert_{L^2}
  \le
  \Vert \vv_0\Vert_{L^2}
  \le
  2\Vert u_0\Vert_{L^2},
  \end{align}
provided $\delta$ is small enough. Thus, in order to prove \eqref{bst:b}, we only need to prove the inequality in the original coordinates for $\vv_0$. 

Taking the $x$ average of \eqref{EQ01}, we obtain the equation for $\vv_0^x$
\begin{align*}
\partial_t\vv_0^x + (\vv\cdot \nabla \vv^x)_0 -\nu\Delta \vv_0^x = 0,
\end{align*}
where we used $\vv^y_0=0$ due to the divergence-free condition. 
Multiplying the  above equation by $\vv_0^x$ and integrating over $y$ on $\mathbb{R}$, we obtain the energy estimate
\begin{align*}
  \frac{1}{2}\frac{d}{dt}\Vert \vv_0^x\Vert_{L^2}^2+\nu\Vert \nabla\vv_0^x\Vert_{L^2}^2
  =
  -\int (\vv\cdot \nabla \vv^x)_0\vv^x_0\,dy.
\end{align*}
Integrating in time gives
  \begin{align*}
  \frac{1}{2}\Vert \vv_0^x(t)\Vert_{L^2}^2+\nu\Vert \nabla\vv^x_0(t)\Vert_{L^2L^2}^2
  &=
  \frac{1}{2}\Vert \vv_0^x (0)\Vert_{L^2}^2
  -\iint (\vv\cdot \nabla \vv^x)_0\vv_0^x\,dydt
  \nonumber\\&=
  \frac{1}{2}\Vert \vv_0^x(0)\Vert_{L^2}^2
  -\mathcal{\bar T}.
  \end{align*}
In order to estimate the transport term $\mathcal{\bar T}$, we recall that $\vv_0^y = 0$, use the divergence free condition, and apply Plancherel's theorem to arrive at
  \begin{align*}
  \mathcal{\bar T} & = \iint  \vv_0^x \, \partial_x \vv^x_0 \vv_0\,dydt + \iint (\vv_{\neq} \cdot \grad \vv^x_{\neq})_0\vv_0^x \,dydt \\ 
& = \iint (\vv_{\neq} \cdot \grad \vv^x_{\neq})_0\vv_0^x \,dydt   
= \iint (\grad\cdot(\vv_{\neq} \vv^x_{\neq}))_0\vv_0^x \,dydt   
= \iint \left(\partial_y(\vv_{\neq}^y \vv^x_{\neq})\right)_0\vv_0^x \,dydt.
\end{align*} 
Physically this term corresponds to the transfer of kinetic energy from the non-zero modes to the zero mode. 
By changing coordinates back to $(z,v)$ inside the $x$ integral we obtain,  
\begin{align*} 
\left(\partial_y(\vv_{\neq}^y \vv^x_{\neq})\right)_0 & = -\int a(\partial_v - t\partial_z)\left( a(\partial_v - t\partial_z)\phi_{\neq} \partial_z \phi_{\neq} \right) dz \\ 
& = -\int a\partial_v\left( a(\partial_v - t\partial_z)\phi_{\neq} \partial_z \phi_{\neq} \right) dz. 
\end{align*}
Therefore, by \eqref{gb:bd}, Lemma \ref{diff:to}, Lemma \ref{lem:DelLDelt}, and the Sobolev embedding we have 
\begin{align*}
\abs{\mathcal{\bar T}} & \lesssim \left(\norm{\partial_v \partial_v^L \phi_{\neq}}_{L^2L^2} \norm{\partial_z\phi_{\neq}}_{L^2 L^\infty} + \norm{\partial_v^L \phi_{\neq}}_{L^2 L^\infty} \norm{\partial_{vz}\phi_{\neq}}_{L^2 L^2}\right)\norm{\bar{v}_0^x}_{L^\infty L^2} \\ 
& \lesssim \norm{f_{\neq}}_{L^2 H^N}^2 \norm{\bar{v}_0^x}_{L^\infty L^2} \\
& \lesssim \varepsilon^3 \nu^{-1/3}, 
\end{align*}
which is sufficient to improve \eqref{bst:b} and hence conclude the proof of Proposition \ref{prop:boot}. 
Therefore, this also concludes the proof of Theorem \ref{T02}. 
\end{proof}

\appendix

\section{Construction and properties of the multiplier $M$}
\label{sec:app}
In this section we recall some of the technical tools regarding the Fourier multiplier $M$ from \cite{BedrossianGermainMasmoudi15a} and adapt them to our simpler setting. 
\begin{lemma} 
\label{l01}
There exists a multiplier $M$ such that the conditions \eqref{eq:M:cond:1}--\eqref{eq:M:cond:5}
hold for some constant $0<c<1$.
\end{lemma}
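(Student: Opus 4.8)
The plan is to construct $M$ explicitly by prescribing its logarithmic time-derivative and integrating. For $k \neq 0$, the idea is to write $M(t,k,\xi) = \exp\left(-\int_0^t \frac{\partial_s M}{M}(s,k,\xi)\,(-1)\,ds\right)$; concretely, set $M(t,k,\xi) = \prod (\text{factors})$ where the dominant factor is designed around the resonant time $t = \xi/k$. A convenient choice is to take $M$ constant away from the critical interval $|t - \xi/k| \lesssim |\xi/k|$, and on that interval to let $-\dot M/M$ equal (a constant multiple of) $\frac{|k|}{k^2 + |\xi - kt|^2}$, so that \eqref{eq:M:cond:3} holds with room to spare. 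Since $\int_{\mathbb R} \frac{|k|}{k^2 + |\xi-kt|^2}\,dt = \pi$ is finite and independent of $k,\xi$, the total variation of $\log M$ is bounded, which gives both the lower bound \eqref{eq:M:cond:2} with a universal $c = e^{-C}$ and the normalization \eqref{eq:M:cond:1} (taking $M \equiv 1$ for $k=0$ and at $t=0$). First I would fix these definitions carefully on the region $t \le 2\xi/k$ (for $\xi/k>0$; the other sign and the post-critical region are handled symmetrically or by keeping $M$ constant), patching continuously so that $M$ is Lipschitz in $t$.

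Next I would verify \eqref{eq:M:cond:4}, the bound $|\partial_\xi M / M| \lesssim 1/|k|$. This follows by differentiating the explicit formula in $\xi$: each occurrence of $\xi$ enters through the combination $\xi - kt$ or through the endpoints of the critical interval ($\xi/k$), and differentiating $\log M = -\int (\cdots)\,ds$ in $\xi$ produces either $\partial_\xi$ of the integrand (which, after the $s$-integration, is $O(1/|k| \cdot 1/|k| \cdot |k|) = O(1/|k|)$ by scaling $s \mapsto s - \xi/k$) or boundary terms from the interval endpoints, each of which is $O(1/|k|)$ since the integrand is $O(1/|k|)$ there. This is the same computation as in \cite{BedrossianGermainMasmoudi15a} and I would cite that for the routine estimates.

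For \eqref{eq:M:cond:5}, I would argue by cases on the size of $|\xi - kt|$ relative to $\nu^{-1/3}$. When $k^2 + |\xi-kt|^2 \le \nu^{-2/3}$ we are within $O(\nu^{-1/3})$ of the critical time, so $-\dot M / M \gtrsim \frac{|k|}{k^2 + |\xi - kt|^2} \gtrsim \nu^{2/3}$ (using $|k|\ge 1$), hence $-\dot M M \gtrsim \nu^{2/3} c$ and $\nu^{-1/6}\sqrt{-\dot M M} \gtrsim \nu^{-1/6}\nu^{1/3}\sqrt c = \nu^{1/6}\sqrt c$... — wait, this needs the complementary term: when instead $k^2 + |\xi - kt|^2 \ge \nu^{-2/3}$, then $\nu^{1/2}|k,\xi-kt| \ge \nu^{1/2}\nu^{-1/3} = \nu^{1/6}$, so $\nu^{-1/6}(\nu^{1/2}|k,\xi-kt|) \ge 1$. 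So in all cases one of the two terms alone gives the lower bound $1 \lesssim \nu^{-1/6}(\sqrt{-\dot M M} + \nu^{1/2}|k,\xi-kt|)$ after adjusting constants; I should double check that in the near-critical regime $-\dot M M \gtrsim \nu^{2/3}$ indeed yields $\nu^{-1/6}\sqrt{-\dot M M}\gtrsim \nu^{1/6}$, which is $\ge $ a constant only for... — the correct statement is $1 \lesssim \nu^{-1/3}(-\dot M M) + \nu^{-1/3}\nu|k,\xi-kt|^2$ squared, i.e. it is \eqref{eq:M:cond:5} squared, and the case analysis at threshold $\nu^{-2/3}$ closes it cleanly.

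The main obstacle is the simultaneous compatibility of \eqref{eq:M:cond:3} (needs $-\dot M/M$ \emph{large}, forcing $M$ to decay a lot) with \eqref{eq:M:cond:2} (needs $M$ bounded \emph{below}): this is why the growth must be concentrated on the single resonant interval where $\int \frac{|k|}{k^2+|\xi-kt|^2}$ is summably small, and one must check there is no accumulation over $k$ or $\xi$. The second delicate point is \eqref{eq:M:cond:6}, $\sqrt{-\dot M M}(t,k,\eta) \lesssim \brak{\eta - \xi}\sqrt{-\dot M M}(t,k,\xi)$, which is a statement that $\sqrt{-\dot M M}$ does not vary too wildly in its frequency argument; I would prove it by showing $-\dot M M$ is, up to the factor $\brak{\eta-\xi}^2$, comparable at $\eta$ and $\xi$ — using that $-\dot M M(t,k,\cdot)$ is a bump of height $\sim |k|^{-1}$ essentially supported where $|kt - \cdot| \lesssim |k|$, and that shifting $\xi$ by $\eta - \xi$ changes $k^2 + |kt-\xi|^2$ by at most a factor $\brak{\eta-\xi}^2$. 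I would state this as a short computation and defer the fully explicit constants to the analogous lemma in \cite{BedrossianGermainMasmoudi15a}.
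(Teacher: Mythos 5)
Your construction is essentially the paper's $M_1$: a multiplier determined by the ODE $-\dot M/M = |k|/(k^2+|\xi-kt|^2)$ (or a truncated variant), with boundedness of $\log M$ coming from $\int_{\mathbb{R}} |k|/(k^2+|\xi-kt|^2)\,dt = \pi$, and \eqref{eq:M:cond:4} from differentiating the explicit formula. That part is fine and matches the paper. The difficulty you noticed in the middle of the argument is real, and you did not resolve it: a \emph{single} multiplier built to satisfy \eqref{eq:M:cond:3} cannot verify \eqref{eq:M:cond:5}. In the near-critical regime $|t-\xi/k|\le \nu^{-1/3}$ your bound gives only $-\dot M M \gtrsim \nu^{2/3}$ (and for $|k|$ large it is still worse, $\sim |k|\nu^{2/3}$), hence $\nu^{-1/6}\sqrt{-\dot M M} \gtrsim \nu^{1/6}$, which degenerates as $\nu \to 0$. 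Squaring the target inequality, as you propose at the end, changes nothing. The paper fixes this by taking $M = M_1 M_2$ with a second factor $M_2$ defined for $k\neq 0$ by
\begin{align*}
-\frac{\dot M_2}{M_2} = \frac{\nu^{1/3}}{(\nu^{1/3}|t-\xi/k|)^2+1},\qquad M_2(0,k,\xi)=1.
\end{align*}
This $M_2$ still satisfies \eqref{eq:M:cond:1}, \eqref{eq:M:cond:2}, \eqref{eq:M:cond:4} (by the same total-variation and differentiation arguments, since $\int_{\mathbb R} \nu^{1/3}/((\nu^{1/3} s)^2+1)\,ds$ is bounded uniformly in $\nu$) and, crucially, gives $-\dot M_2/M_2 \gtrsim \nu^{1/3}$ for $|t-\xi/k|\le\nu^{-1/3}$, so $\nu^{-1/6}\sqrt{-\dot M_2 M_2} \gtrsim 1$ there. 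Combined with your observation that $\nu^{1/2}|k,\xi-kt| \ge \nu^{1/6}$ when $|t-\xi/k| \ge \nu^{-1/3}$ (since $|k|\ge 1$), the case analysis now closes; this is precisely the paper's argument. Without this additional factor the enhanced-dissipation condition \eqref{eq:M:cond:5} fails, and since it is the basis for \eqref{ineq:fneqdec} and ultimately for \eqref{ineq:thmED}, this is a genuine gap rather than a routine detail. (As a minor remark, the lemma only claims \eqref{eq:M:cond:1}--\eqref{eq:M:cond:5}, so your discussion of \eqref{eq:M:cond:6} is extra; but it is handled correctly there, exactly as in the paper, by the pointwise bound $\frac{k^2+|\eta-kt|^2}{k^2+|\xi-kt|^2} \lesssim 1+|\eta-\xi|^2$.)
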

\begin{proof}[Proof of Lemma~\ref{l01}]
We consider a multiplier of the form $M=M_1M_2$ such that both $M_1$ and $M_2$ satisfy~\eqref{eq:M:cond:1}. We choose $M_1$ such that
for $k\neq0$ it is determined by the ODE
 \begin{align*}
  -\frac{\dot{M_1}}{M_1} & =\frac{\abs{k}}{k^2+|\xi-kt|^2} \\ 
  M_1(0,k,\eta) & = 1.
  \end{align*}
Similar multipliers appeared in \cite{Zillinger2014,BGM15I,BGM15II,BedrossianGermainMasmoudi15a}. 
The above multiplier clearly satisfies \eqref{eq:M:cond:3}.
Notice that for $k \neq 0$, there holds
\begin{align*}
\frac{k^2 + \abs{\eta-kt}^2}{k^2+|\xi-kt|^2} = \frac{k^2 + \abs{\xi-kt + \eta-\xi}^2}{k^2+|\xi-kt|^2}  \lesssim 1 + \abs{\eta-\xi}^2,  
\end{align*}
and hence \eqref{eq:M:cond:6} holds for $M_1$. 
A direct computation shows that 
  \begin{align*}
  M_1(t, k, \xi)
  =
  \exp{\left(-\int_0^t \frac{\abs{k}}{k^2+|\xi-ks|^2}\,ds\right)},
  \end{align*}
which implies \eqref{eq:M:cond:2} holds for $M_1$.
Taking the derivative of $M_1$ with respect to $\xi$ gives
  \begin{align*}
  \left|\frac{\partial_{\xi}M_1(k,\xi)}{M_1(k,\xi)}\right|
  &=
  \left|\int_0^t \frac{2\abs{k}(\xi-k s)}{(k^2+|\xi-ks|^2)^2}\,ds\right|
  \le
  \frac{2}{\abs{k}^2}\int_0^t \frac{1}{(1+|\xi/k-s|^2)}\,ds, 
  \end{align*}
which proves that \eqref{eq:M:cond:4} holds for $M_1$. Note here that $k \neq 0$ implies $|k|\geq 1$.

Next, we define $M_2$ by the differential equation (for $k \neq 0$),   
  \begin{align*}
   -\frac{\dot{M_2}}{M_2} & =\frac{\nu^{1/3}}{(\nu^{1/3}\left|t-\xi/k\right|)^{2}+1} \\ 
   M_2(0,k,\eta) & = 1. 
  \end{align*}
This multiplier was introduced in \cite{BedrossianGermainMasmoudi15a}. 
Similarly to $M_1$, we deduce that \eqref{eq:M:cond:2}, \eqref{eq:M:cond:4}, and \eqref{eq:M:cond:6} all hold for $M_2$ (and hence also for $M = M_1 M_2$). 
Since for $k\neq0$ we have that 
\begin{align*}
1\lesssim \nu^{1/3}|k,\xi-kt| \quad \mbox{ if } \nu^{-1/3}\le\left|t-\dfrac\xi k\right|
\end{align*}
and 
\begin{align*}
1\lesssim \dfrac{1}{(\nu^{1/3}\left|t-\xi/k\right|)^{2}+1} \quad \mbox{ if }  \nu^{-1/3}\ge\left|t-\dfrac\xi k\right|,
\end{align*}
inequality \eqref{eq:M:cond:5} holds for $M_2$. 
Therefore, the multiplier $M$ we constructed satisfies conditions \eqref{eq:M:cond:1}--\eqref{eq:M:cond:5}, completing the proof.
  \end{proof}
\begin{remark}
In condition \eqref{eq:M:cond:5}, the power $\nu^{-1/6}$ in front of the multiplier is sharp in the sense that $1/6$ is the smallest sacrifice we need to make to bound the expression on the right side from below by a constant. In fact, if we make $M_2$ to be positive constants independent of $\nu$ when $\left|t-\xi/k\right|\ge\nu^{-1/3}$, then the size of $\dot{M}_2$ should be approximately $\nu^{1/3}$.  
Hence, if we did not need \eqref{eq:M:cond:6} or \eqref{eq:M:cond:4}, one could construct $M_2$ to satisfy \eqref{eq:M:cond:5} with
  \begin{align*}
  & \dot{M}_2=0 \spacef {\rm if}\   \nu^{-1/3}\ge\left|t-\dfrac\xi k\right|
  \nonumber\\
  & \dot{M}_2=-\dfrac12\nu^{1/3} \spacef {\rm if}\   \nu^{-1/3}\ge\left|t-\dfrac\xi k\right|.
  \end{align*}
\end{remark}

We need the following lemma to commute $\sqrt{-\dot{M}M}$ with $\Delta_L\Delta_t^{-1}$, the latter of which is not a Fourier multiplier. 
\begin{lemma} \label{l03}
Let $f\in H^N$ and $N > 1$. Then the following estimate holds for $\delta$ sufficiently small,
\begin{align*}
  \left\Vert\sqrt{-\dot{M}M}\Delta_L\Delta_t^{-1}\ne{f}\right\Vert_{H^{N}} \lesssim \left\Vert\sqrt{-\dot{M}M}\ne{f}\right\Vert_{H^{N}}.
\end{align*}
\end{lemma}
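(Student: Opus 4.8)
The plan is to write $\Delta_L \Delta_t^{-1} = \mathrm{Id} - ((a^2-1)\partial_{vv}^L + b\partial_v^L)\Delta_t^{-1}$, using the identity \eqref{deri:3} which expresses $\Delta_t = \Delta_L + ((a^2-1)\partial_{vv}^L + b\partial_v^L)$. Applying this to $\ne{f}$ and multiplying by $\sqrt{-\dot{M}M}$, the first term is exactly $\sqrt{-\dot M M}\ne f$, which is harmless. So everything reduces to controlling
\begin{align*}
\left\Vert \sqrt{-\dot{M}M}\left((a^2-1)\partial_{vv}^L + b\partial_v^L\right)\Delta_t^{-1}\ne{f}\right\Vert_{H^N}
\end{align*}
by the right-hand side. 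The strategy is a fixed-point / Neumann-series argument: set $g = \Delta_t^{-1}\ne f$ so that $\Delta_L g = \ne f - ((a^2-1)\partial_{vv}^L + b\partial_v^L)g$, and bootstrap bounds on the weighted norm of $\Delta_L g$. Since $|k|\ge 1$ on the support of $\ne f$, the operator $\Delta_L^{-1}$ is a bounded Fourier multiplier (uniformly in $t$) on $H^N$, and more importantly $\partial_{vv}^L \Delta_L^{-1}$ and $\partial_v^L \Delta_L^{-1}$ (hence $\partial_{vv}^L \Delta_t^{-1}$, $\partial_v^L \Delta_t^{-1}$ after closing the argument) map $H^N \to H^N$ boundedly since $|\partial_v^L|\le |\nabla_L|$ in Fourier.

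The key technical point — and the reason the lemma is not simply a product estimate — is that $\sqrt{-\dot M M}$ is a $t$-dependent Fourier multiplier in the $v$-variable, while $(a^2-1)$ and $b$ are multiplication operators in $v$; these do not commute. The mechanism to handle this is precisely condition \eqref{eq:M:cond:6}, which says $\sqrt{-\dot M M(t,k,\xi)} \lesssim \brak{\xi-\eta}\sqrt{-\dot M M(t,k,\eta)}$: on the Fourier side, multiplication by $(a^2-1)$ corresponds to convolution in $\xi$ against $\widehat{a^2-1}$, and \eqref{eq:M:cond:6} lets us move the weight $\sqrt{-\dot M M}$ past this convolution at the cost of a factor $\brak{\xi-\eta}$, which is absorbed by $\widehat{a^2-1}(\xi-\eta)$ thanks to the Sobolev regularity $s \ge 2+N > N+2$ of $a$ (via \eqref{gb:bd} with $\sigma = N+2 > 2$). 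This is exactly the same manipulation carried out in the source-term estimate \eqref{eq:S:bound:1}. Concretely, I would estimate, schematically,
\begin{align*}
\left\Vert \sqrt{-\dot{M}M}\left((a^2-1)\partial_{vv}^L g\right)\right\Vert_{H^N} \lesssim \Vert a^2 - 1\Vert_{H^{N+2}} \left\Vert \sqrt{-\dot{M}M}\,\partial_{vv}^L g\right\Vert_{H^N} \lesssim \delta \left\Vert \sqrt{-\dot{M}M}\,\Delta_L g\right\Vert_{H^N},
\end{align*}
and similarly for the $b\,\partial_v^L g$ term, using $b = a\partial_v a$ and \eqref{gb:bd}.

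Putting this together: write $w = \sqrt{-\dot M M}\,\Delta_L\Delta_t^{-1}\ne f = \sqrt{-\dot M M}\,\Delta_L g$. The identity above gives $w = \sqrt{-\dot M M}\ne f - \sqrt{-\dot M M}((a^2-1)\partial_{vv}^L + b\partial_v^L)g$, and since $\partial_{vv}^L, \partial_v^L$ are bounded by $\Delta_L$ composed with a bounded multiplier (using $|k|\ge 1$), the weighted-norm commutator estimate yields $\Vert w\Vert_{H^N} \le \Vert \sqrt{-\dot M M}\ne f\Vert_{H^N} + C\delta \Vert w\Vert_{H^N}$. For $\delta$ small enough that $C\delta \le 1/2$, this absorbs the last term and gives the claim. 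The main obstacle is making the weighted commutator step rigorous — in particular verifying that on the support of $\ne f$ the symbols of $\partial_{vv}^L\Delta_L^{-1}$ are genuinely bounded uniformly in $t$ and $k$ (which is where $|k|\ge1$ is essential, since without it the quasilinear elliptic operator $\Delta_t$ could degenerate at the critical time $\xi = kt$), and tracking that \eqref{eq:M:cond:6} is applied with the correct sign of the frequency shift so that the convolution bound closes.
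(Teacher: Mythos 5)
Your proposal is correct and follows essentially the same route as the paper: both decompose via $\Delta_L = \Delta_t - (a^2-1)\partial_{vv}^L - b\,\partial_v^L$, use condition \eqref{eq:M:cond:6} to pass $\sqrt{-\dot M M}$ through the multiplication operators by $(a^2-1)$ and $b$ at the cost of $\brak{\xi-\eta}$ absorbed by \eqref{gb:bd}, bound $\partial_{vv}^L,\partial_v^L$ by $\Delta_L$, and absorb the resulting $O(\delta)$ term into the left-hand side for $\delta$ small. The ``Neumann-series'' framing is really the same absorption argument the paper uses.
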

  \begin{proof}[Proof of Lemma~\ref{l03}]
Using the equality
\begin{align*}  
\Delta_L=\Delta_t-(a^2-1)\partial_{vv}^L-b\partial_{v}^L,
\end{align*}
we have
  \begin{align}
  \label{A01}
  \left\Vert\sqrt{-\dot{M}M}\Delta_L\Delta_t^{-1}\ne{f}\right\Vert_{H^{N}}
  &\quad\lesssim
   \left\Vert\sqrt{-\dot{M}M}\ne{f}\right\Vert_{H^{N}}
   +
  \left \Vert\sqrt{-\dot{M}M}\left((a^2-1)\partial_{vv}^L\Delta_t^{-1}\ne{f}\right)\right\Vert_{H^{N}}
   \nonumber\\&
   \qquad
   +
   \left\Vert\sqrt{-\dot{M}M}\left(b\partial_{v}^L\Delta_t^{-1}\ne{f}\right)\right\Vert_{H^{N}}.
  \end{align}
By \eqref{eq:M:cond:6}, $N>1$, and \eqref{gb:bd}, we deduce
  \begin{align*}
  \left\Vert\sqrt{-\dot{M}M}\left((a^2-1)\partial_{vv}^L\Delta_t^{-1}\ne{f}\right)\right\Vert_{H^{N}}
   &=
   \left\Vert\sqrt{-\dot{M}M}\left(((a-1)^2-2(a-1))\partial_{vv}^L\Delta_t^{-1}\ne{f}\right)\right\Vert_{H^{N}}
   \nonumber\\
   &\lesssim
   \Vert a-1\Vert_{H^{N+1}}\left\Vert\sqrt{-\dot{M}M}\Delta_L\Delta_t^{-1}\ne{f}\right\Vert_{H^{N}}
   \nonumber\\
   &\lesssim
   \delta\left\Vert\sqrt{-\dot{M}M}\Delta_L\Delta_t^{-1}\ne{f}\right\Vert_{H^{N}}, 
  \end{align*}
and similarly
  \begin{align*}
  \left\Vert\sqrt{-\dot{M}M}\left(b\partial_{v}^L\Delta_t^{-1}\ne{f}\right)\right\Vert_{H^{N}}
&\lesssim
   \Vert b\Vert_{H^{N+1}}\left\Vert\sqrt{-\dot{M}M}\Delta_L\Delta_t^{-1}\ne{f}\right\Vert_{H^{N}}
 \notag\\
&\lesssim
   \delta\left\Vert\sqrt{-\dot{M}M}\Delta_L\Delta_t^{-1}\ne{f}\right\Vert_{H^{N}}.
  \end{align*}
Since $\delta\ll1$, the result follows from \eqref{A01} immediately.
\end{proof}

The following lemma is proved in the same manner as Lemma \ref{l03}, although slightly simpler. 
In particular, this lemma shows that $\Delta_L \Delta_t^{-1}$ can be approximately treated as the identity for $\delta$ sufficiently small. 
\begin{lemma}\label{lem:DelLDelt}
Let $f\in H^N$ and $N > 1$. Then the following estimate holds for $\delta$ sufficiently small,
\label{l05}
\begin{align*}
  \left\Vert\Delta_L\Delta_t^{-1}\ne{f}\right\Vert_{H^{N}}
  \lesssim
   \left\Vert\ne{f}\right\Vert_{H^{N}}. 
  \end{align*}
\end{lemma}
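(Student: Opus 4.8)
The plan is to mimic the proof of Lemma~\ref{l03} verbatim, but one derivative lower in the structure: instead of commuting $\sqrt{-\dot M M}$ past $\Delta_L \Delta_t^{-1}$, we simply estimate $\Delta_L \Delta_t^{-1}\ne f$ in $H^N$ directly. First I would write, as in that proof, the algebraic identity
\begin{align*}
\Delta_L = \Delta_t - (a^2-1)\partial_{vv}^L - b\,\partial_v^L,
\end{align*}
apply it to $\Delta_t^{-1}\ne f$, and invoke the triangle inequality to get
\begin{align*}
\left\Vert \Delta_L \Delta_t^{-1}\ne f\right\Vert_{H^N} \lesssim \left\Vert \ne f\right\Vert_{H^N} + \left\Vert (a^2-1)\partial_{vv}^L \Delta_t^{-1}\ne f\right\Vert_{H^N} + \left\Vert b\,\partial_v^L \Delta_t^{-1}\ne f\right\Vert_{H^N}.
\end{align*}

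Next I would bound the two error terms on the right. Writing $a^2-1 = (a-1)^2 - 2(a-1)$ and using the $H^N$ algebra property (valid since $N>1$), the first error term is controlled by $\Vert a-1\Vert_{H^{N+1}}$ times $\left\Vert \Delta_L\Delta_t^{-1}\ne f\right\Vert_{H^N}$ — here one uses that $\partial_{vv}^L\Delta_t^{-1}\ne f$ and $\partial_v^L \Delta_t^{-1}\ne f$ are both dominated, at the level of $H^N$ norms, by $\Delta_L\Delta_t^{-1}\ne f$ (this is the usual fact that, restricted to nonzero $z$-frequencies, the symbols $\eta^2$, $(\eta-kt)^2$ and $|\eta-kt|$ are bounded by $k^2 + |\eta-kt|^2$, up to constants, since $|k|\geq 1$). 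Likewise the $b\,\partial_v^L$ term is controlled by $\Vert b\Vert_{H^{N+1}}\left\Vert \Delta_L\Delta_t^{-1}\ne f\right\Vert_{H^N}$. Then \eqref{gb:bd} gives $\Vert a-1\Vert_{H^{N+1}} + \Vert b\Vert_{H^{N+1}} \lesssim \delta$, so both error terms are $\lesssim \delta \left\Vert \Delta_L\Delta_t^{-1}\ne f\right\Vert_{H^N}$.

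Finally, for $\delta$ sufficiently small these two terms are absorbed into the left-hand side, leaving $\left\Vert \Delta_L\Delta_t^{-1}\ne f\right\Vert_{H^N} \lesssim \left\Vert \ne f\right\Vert_{H^N}$, which is the claim. The only genuine subtlety — and the one place the argument is not purely formal — is justifying the absorption, i.e. that $\left\Vert \Delta_L\Delta_t^{-1}\ne f\right\Vert_{H^N}$ is a priori finite so that subtracting it from both sides is legitimate; this follows since $\Delta_t$ is a small perturbation of $\Delta_L$ (by the same $\delta$-smallness of $a-1$ and $b$, e.g. via a Neumann-series / contraction argument on $\Delta_t = \Delta_L(I + \Delta_L^{-1}((a^2-1)\partial_{vv}^L + b\,\partial_v^L))$), so $\Delta_t^{-1}$ maps $H^{N}$-vorticities to functions whose $\Delta_L$ lies in $H^N$. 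Everything else is the same bookkeeping as in Lemma~\ref{l03}, which is why the statement asserts the proof is "slightly simpler."
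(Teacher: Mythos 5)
Your argument is correct and is exactly the proof the paper intends: the paper states that Lemma~\ref{l05} is proved ``in the same manner as Lemma~\ref{l03}, although slightly simpler,'' and your proof reproduces that argument with the $\sqrt{-\dot M M}$ multiplier stripped away, using the same decomposition $\Delta_L = \Delta_t - (a^2-1)\partial_{vv}^L - b\,\partial_v^L$, the $H^N$ algebra property, the symbol bound on nonzero $z$-modes, and absorption for $\delta$ small. Your additional remark on a priori finiteness of $\bigl\Vert \Delta_L\Delta_t^{-1}f_{\neq}\bigr\Vert_{H^N}$ (needed to justify the absorption) is a legitimate technical point that the paper leaves implicit.
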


The following estimate applies to the zero mode of the velocity field. 
\begin{lemma}
\label{l04}
Let $f$ be such that $f_0\in H^N$, $N>1$, and $\partial_{v}\Delta_t^{-1}f_0\in L^2$. Then for $\delta$ sufficiently small, there holds 
  \begin{align*}
  \Vert\partial_{vv}\Delta_t^{-1}f_0\Vert_{H^{N}}
  \lesssim
  \Vert f_0\Vert_{H^N}+\delta\Vert\partial_{v}\Delta_t^{-1}f_0\Vert_{L^{2}}. 
  \end{align*}
\end{lemma}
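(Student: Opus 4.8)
The plan is to reduce the statement to a one-dimensional elliptic estimate for a Sturm--Liouville operator. On the zero mode one has $\partial_z \equiv 0$, so by \eqref{def:Deltat} and \eqref{g:b} the operator $\Delta_t$ acts as $\Delta_t g = a^2 \partial_{vv} g + b\,\partial_v g = a\,\partial_v(a\,\partial_v g)$, while $\Delta_L$ acts simply as $\partial_{vv}$; in particular $\Delta_L \Delta_t^{-1} f_0 = \partial_{vv}\Delta_t^{-1}f_0$, consistent with the quantity in the statement. Writing $g = \Delta_t^{-1}f_0$ and solving the identity $f_0 = a^2\partial_{vv} g + b\,\partial_v g$ for $\partial_{vv} g$ — which is legitimate since \eqref{gb:bd} forces $\|a-1\|_{L^\infty}\lesssim\delta$ and hence $a\ge 1/2$ for $\delta$ small — one obtains the pointwise identity
\[
  \partial_{vv}\Delta_t^{-1}f_0 = a^{-2} f_0 - a^{-2} b\,\partial_v\Delta_t^{-1}f_0,
\]
and it remains to estimate the $H^N$ norm of the right-hand side.

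For the first term I would isolate the constant part of $a^{-2}$: since $z\mapsto (1+z)^{-2}-1$ is smooth and vanishes at $0$, \eqref{gb:bd} together with a standard composition estimate (cf. Lemma \ref{diff:back}) gives $\|a^{-2}-1\|_{H^\sigma}\lesssim\delta$ for any $\sigma\le s$; hence, using that $H^N(\RR)$ is an algebra for $N>1$,
\[
  \|a^{-2}f_0\|_{H^N} \le \|f_0\|_{H^N} + \|(a^{-2}-1)f_0\|_{H^N} \lesssim (1+C\delta)\|f_0\|_{H^N}.
\]
For the second term the same peeling gives $\|a^{-2}b\|_{H^N}\lesssim \|b\|_{H^N}(1+\|a^{-2}-1\|_{H^N})\lesssim\delta$ by \eqref{gb:bd}, so by the algebra property $\|a^{-2}b\,\partial_v\Delta_t^{-1}f_0\|_{H^N}\lesssim\delta\|\partial_v\Delta_t^{-1}f_0\|_{H^N}$. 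Since only $\partial_v\Delta_t^{-1}f_0\in L^2$ is assumed, I would control this last norm by the elementary frequency split $\|\partial_v g\|_{H^N}\lesssim\|\partial_v g\|_{L^2}+\|\partial_{vv}g\|_{H^N}$, which follows from $\ang{\xi}^N|\xi|\lesssim|\xi|$ on $|\xi|\le 1$ and $|\xi|\le|\xi|^2$ on $|\xi|>1$ in Plancherel.

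Assembling these bounds yields
\[
  \|\partial_{vv}\Delta_t^{-1}f_0\|_{H^N} \lesssim \|f_0\|_{H^N} + \delta\|\partial_v\Delta_t^{-1}f_0\|_{L^2} + C\delta\,\|\partial_{vv}\Delta_t^{-1}f_0\|_{H^N},
\]
and for $\delta$ sufficiently small the last term is absorbed on the left, which is the claim. The only genuinely delicate point — and the main, though mild, obstacle — is that this absorption presupposes $\|\partial_{vv}\Delta_t^{-1}f_0\|_{H^N}<\infty$ a priori. This can be secured by bootstrapping regularity directly from the displayed identity: the hypotheses already give $\partial_{vv}\Delta_t^{-1}f_0\in L^2$, and differentiating the identity repeatedly (the coefficients $a^{-2},b$ lying in $H^s$ with $s\ge 2+N$) propagates this up to $H^{\lceil N\rceil}$ and hence, by interpolation, to $H^N$; alternatively, one runs the estimate for mollified data and passes to the limit. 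This is the same mechanism as in Lemma \ref{l03}, but simpler, since on the zero mode there is no time-dependence in the operators and no Fourier-multiplier commutator to handle.
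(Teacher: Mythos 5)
Your proof is correct and follows essentially the same route as the paper: on the zero mode, rearrange $\Delta_t = a^2\partial_{vv}+b\partial_v$ to isolate $\partial_{vv}\Delta_t^{-1}f_0$, use \eqref{gb:bd} and the algebra property to get coefficients of size $\delta$, control $\|\partial_v\Delta_t^{-1}f_0\|_{H^N}$ by its $L^2$ norm plus $\|\partial_{vv}\Delta_t^{-1}f_0\|_{H^N}$ via a frequency split, and absorb the small $\delta$ term. The one wrinkle — dividing through by $a^2$, which forces you to invoke a Moser/composition estimate for $a^{-2}-1$ — is not needed in the paper's version, which writes $\partial_{vv}\Delta_t^{-1}f_0 = f_0 - (a^2-1)\partial_{vv}\Delta_t^{-1}f_0 - b\partial_v\Delta_t^{-1}f_0$ and absorbs the $(a^2-1)$ term along with the rest; your closing remark about securing a priori finiteness before absorbing is a legitimate technical point that the paper leaves implicit.
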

\begin{proof}[Proof of Lemma~\ref{l04}]
Since 
  \begin{align*}
  \partial_{vv}\Delta_t^{-1}f_0
  =
  (\Delta_t-(a^2-1)\partial_{vv}-b\partial_v)\Delta_t^{-1}f_0,
  \end{align*}
we have by $N>1$ and \eqref{gb:bd} (interpolating $H^N$ between $L^2$ and $H^{N+1}$), 
  \begin{align*}
  \Vert\partial_{vv}\Delta_t^{-1}f_0\Vert_{H^{N}}
  &\lesssim
  \Vert f_0\Vert_{H^{N}}+\delta\Vert\partial_{vv}\Delta_t^{-1}f_0\Vert_{H^{N}}
  +\delta\Vert\partial_{v}\Delta_t^{-1}f_0\Vert_{H^{N}}
  \nonumber\\&
  \lesssim
  \Vert f_0\Vert_{H^{N}}+\delta\Vert\partial_{vv}\Delta_t^{-1}f_0\Vert_{H^{N}} +\delta\Vert\partial_{v}\Delta_t^{-1}f_0\Vert_{L^{2}}.
  \end{align*}
For $\delta\ll1$, we may absorb the second term on the right side and we obtain the desired result.
\end{proof}


\section*{Acknowledgments}
The work of JB was partially supported by NSF grant DMS-1462029, and by an Alfred P. Sloan Research Fellowship. The work of VV was partially supported by NSF grant DMS-1514771 and by an Alfred P. Sloan Research Fellowship. The work of FW was partially supported by NSF grant DMS-1514771.


\def\cprime{$'$}


\end{document}